\renewcommand{\phi}{\varphi}
\renewcommand{\epsilon}{\varepsilon}
\renewcommand{\theta}[0]{\vartheta}
\newcommand{\exteriorpower}[0]{\bigwedge\nolimits\!}
\newcommand{\N}{\mathbb{N}}
\newcommand{\Size}[1]{\left\lvert #1 \right\rvert}
\newcommand{\Span}[1]{\left\langle\, #1 \,\right\rangle}
\newcommand{\Set}[1]{\left\{ #1 \right\}}
\newcommand{\Gp}[0]{\mathcal{G}(p)}
\newcommand{\Hp}[0]{H_{p}}
\newcommand{\Hc}[0]{\mathcal{H}}
\newcommand{\Jc}[0]{\mathcal{J}}
\newcommand{\norm}[0]{\trianglelefteq}
\newcommand{\F}{\mathbf{F}}
\DeclareMathOperator{\GL}{GL}
\DeclareMathOperator{\AGL}{AGL}
\DeclareMathOperator{\Aut}{Aut}
\DeclareMathOperator{\Frat}{Frat}
\DeclareMathOperator{\Hol}{Hol}
\DeclareMathOperator{\NHol}{NHol}
\DeclareMathOperator{\inv}{inv}
\newtheorem{dummy}{Dummy}
\numberwithin{dummy}{section}
\numberwithin{figure}{section}
\newtheorem{theorem}[dummy]{Theorem}
\newtheorem{remark}[dummy]{Remark}
\newtheorem{lemma}[dummy]{Lemma}
\newtheorem{proposition}[dummy]{Proposition}
\theoremstyle{definition}
\newtheorem{definition}[dummy]{Definition}
\theoremstyle{remark}
\numberwithin{equation}{section}
\begin{document}

\date{30 September 2019, 15:35 CEST --- Version 5.07%
}

\title[Multiple Holomorphs of $p$-groups of class two]%
      {Multiple Holomorphs of\\
      Finite $p$-Groups of Class Two}
      
\author{A. Caranti}

\address[A.~Caranti]%
 {Dipartimento di Matematica\\
  Universit\`a degli Studi di Trento\\
  via Sommarive 14\\
  I-38123 Trento\\
  Italy} 

\email{andrea.caranti@unitn.it} 

\urladdr{http://www.science.unitn.it/$\sim$caranti/}

\subjclass[2010]{20B35 20D15 20D45}

\keywords{Holomorph, multiple holomorph, regular subgroups, finite
  $p$-groups, automorphisms} 

\begin{abstract}
Let $G$ be a group, and $S(G)$ be the group of permutations on the set
$G$. The (abstract) holomorph of $G$ is the natural semidirect product
$\Aut(G) G$. We will write $\Hol(G)$ for the normalizer of the image in
$S(G)$ of the right regular representation of $G$,
\begin{equation*}
 \Hol(G) = N_{S  (G)}(\rho(G)) = \Aut(G) \rho(G) \cong \Aut(G) G,
\end{equation*}
and also refer to it as the holomorph of $G$. More generally,
if $N$ is any regular subgroup of $S(G)$, then
$N_{S(G)}(N)$ is isomorphic to the holomorph of $N$.

G.A.~Miller has shown that the group
\begin{equation*}
 T(G) = N_{S(G)}(\Hol(G))/\Hol(G)
\end{equation*}
acts regularly on the set of the regular subgroups $N$ of $S(G)$ which
are isomorphic  to $G$,  and have  the same holomorph  as $G$,  in the
sense that $N_{S(G)}(N) = \Hol(G)$.

If $G$ is non-abelian, inversion on $G$ yields an involution in
$T(G)$. Other non-abelian regular subgroups $N$ of $S(G)$ having the same
holomorph as $G$ yield (other) involutions
in $T(G)$.
In the cases studied in the literature, $T(G)$ turns out to be a
finite $2$-group, which is often elementary abelian. 

In this  paper we exhibit  an example of  a finite $p$-group  $\Gp$ of
class $2$, for  $p > 2$ a prime, which is  the smallest $p$-group such
that $T(\Gp)$ is non-abelian,  and not a $2$-group. Moreover, $T(\Gp)$
is not generated by involutions when $p > 3$.

More generally, we develop some aspects  of a theory of $T(G)$ for $G$
a finite $p$-group of class $2$, for $p > 2$. In particular, we show
that for such 
a group $G$ there is an element of order $p-1$ in $T(G)$, and exhibit
examples where $\Size{T(G)} = p - 1$, and others where $T(G)$
contains a large elementary abelian $p$-subgroup.
\end{abstract}

\thanks{To appear in J.\ Algebra \textbf{516} (2018),
352--372.
\endgraf
\texttt{10.1016/j.jalgebra.2018.09.031}
\endgraf
The author is a member of INdAM---GNSAGA. The author
gratefully acknowledges support from the Department of Mathematics of
the University of Trento and  from MIUR---Italy via PRIN 2015TW9LSR\_005.}

\maketitle

 \thispagestyle{empty}

 \bibliographystyle{amsalpha}

\section*{Introduction}

Let $G$ be a group, and $S(G)$ be the group of permutations on the set
$G$, under left-to-right composition. The image $\rho(G)$ of the right
regular representation of $G$ is 
a regular subgroup of $S(G)$, and its normalizer is the semidirect
product of $\rho(G)$ by the automorphism group $\Aut(G)$ of $G$,
\begin{equation*}
  N_{S(G)}(\rho(G)) 
  = 
  \Aut(G) \rho(G).
\end{equation*}
We will refer to this group, which is isomorphic to the \emph{abstract
  holomorph} $\Aut(G) G$ of $G$, as
the \emph{holomorph} $\Hol(G)$ of $G$.

More generally, if $N$ is a regular subgroup of $S(G)$, then
$N_{S(G)}(N)$ is isomorphic to the holomorph of $N$.
Let us thus consider the set
\begin{equation*}
  \Hc(G)
  =
  \Set{ N \le S(G) : \text{$N$ is regular, $N \cong G$ and $N_{S(G)}(N)
      = \Hol(G)$} }.
\end{equation*}
of the regular subgroups of $S(G)$ which are isomorphic to $G$, and
have in some sense the same holomorph as $G$.

G.A.~Miller has  shown~\cite{Miller-multi} that the so-called
\emph{multiple holomorph} of $G$ 
\begin{equation*}
  \NHol(G) 
  = 
  N_{S(G)}(\Hol(G))
  = 
  N_{S(G)}(N_{S(G)}(\rho(G)))
\end{equation*}
acts  transitively on $\Hc(G)$, and  thus the group
\begin{equation*}
  T(G)  = \NHol(G)  / \Hol(G)
\end{equation*}
acts regularly  on $\Hc(G)$.

Let $G$ be  a non-abelian group.  The inversion map  $\inv : g \mapsto
g^{-1}$ on $G$,  which conjugates $\rho(G)$ to  the image $\lambda(G)$
of the left regular representation $\lambda$, induces an involution in
$T(G)$.  More generally, let $N$ be a non-abelian, regular subgroup of
$G$  that  has  the  same  holomorph   as  $G$  (that  is,  such  that
$N_{S(G)}(N) =  \Hol(G) = N_{S(G)}(\rho(G))$), and  consider the group
$S(N)$.   As above,  the inversion  map  $\inv_{N}$ on  $N$ yields  an
involution in $T(N)$. The  bijection $N \to G$ that maps  $n \in N$ to
$1^{n}  \in  G$  yields  an  isomorphism $S(N)  \to  S(G)$  that  maps
$\rho(N)$ to  $N$.  Under  this isomorphism,  $\inv_{N}$ maps  onto an
involution  in $T(G)$.   So  each  such $N$  yields  an involution  in
$T(G)$, although  different $N$ need not  induce different involutions
in $T(G)$.

Recently T.~Kohl  has described~\cite{Kohl-multi} the  set $\Hc(G)$
and the group $T(G)$
for $G$ dihedral or generalized quaternion.
In~\cite{fgag}, we have redone,  via a commutative
ring connection, the work of
Mills~\cite{Mills-multi}, which determined $\Hc(G)$
and $T(G)$ for $G$ a finitely generated abelian
group. In~\cite{perfect} we have studied the case of finite
perfect groups. 

In  all these  cases, $T(G)$  turns out  to be  an elementary  abelian
$2$-group.  T.~Kohl  mentions in~\cite{Kohl-multi} two  examples where
$T(G)$  is   a  non-abelian  $2$-group;   it  can  be   verified  with
\textsf{GAP}~\cite{GAP4} that  for the finite $2$-groups  $G$ of order
up to $8$ the $T(G)$ are elementary abelian $2$-groups, and that there
are  exactly  two  groups  $G$  of order  $16$  such  that  $T(G)$  is
non-abelian: for  the first one  $T(G)$ is isomorphic to  the dihedral
group $D$ of order $8$, while  for the second one $T(G)$ is isomorphic
to the direct product  of $D$ by a group of order  $2$.  In a personal
communication, Kohl  has asked  whether $T(G)$  is always  a $2$-group
when $G$ is finite.

In this paper  we study the groups $T(G)$, for  $G$ a finite $p$-group
of   nilpotence  class   $2$,  where   $p$  is   an  odd   prime.   

In
Section~\ref{sec:powers} we show that for such a $G$, the group $T(G)$
contains      an      element     of      order      $p-1$
(Proposition~\ref{prop:powers}). In Section~\ref{sec:examples} we show
that this minimum  order is attained by the two  non-abelian groups of
order $p^{3}$, and by the finite $p$-groups that are free in the
variety of groups of class $2$ and exponent
$p$ (Proposition~\ref{thm:p3} and Theorem~\ref{thm:free}). 

In Section~\ref{sec:Tgp} we show that the
group
\begin{equation*}
  \Gp
  =
  \Span{
    x, y
    :
    x^{p^{2}}, y^{p^{2}},
    [x, y] = x^{p}
  },
\end{equation*}
of  order $p^{4}$  and class  $2$, has  $T(\Gp)$ of  order $p  (p-1)$,
isomorphic to  $\AGL(1, p)$, that is,  to the holomorph of  a group of
order $p$. Thus  $T(\Gp)$ is non-abelian, it is 
not  a  $2$-group, and  for  $p  > 3$  it  is  not even  generated  by
involutions, as  the subgroup generated  by the involutions  has index
$(p-1)/2$ in $T(\Gp)$.

Note that when $G$ is an  abelian $p$-group, with $p$  odd, $T(G) =
\Set{1}$ \cite[Theorem~3.1 and Lemma~3.2]{fgag}.  Therefore for $p > 3$
the non-abelian groups  of order $p^{3}$ are the  smallest examples of
finite $p$-groups  such that $T(G)$ is  not a $2$-group, and  $\Gp$ is
the smallest example of a finite $p$-group  $G$, for $p > 2$, such that
$T(G)$ is non-abelian, and not a $2$-group.

In Section~\ref{sec:examples}, besides  the examples already mentioned
above,  we exhibit  a  class of  finite $p$-groups  $G$  of class  $2$
(Theorem~\ref{thm:big})  for  which  $T(G)$  is  non-abelian,  and
contains a large elementary abelian $p$-subgroup.

Section~\ref{sec:prelim}   collects   some  preliminary   facts.    In
Section~\ref{sec:tools}    we     introduce    a     linear    setting
(Proposition~\ref{prop:Delta}) that simplifies the calculations in the
later sections, and  develop some more general aspects of  a theory of
$T(G)$ for finite $p$-groups $G$ of class $2$, for $p > 2$.

Regular subgroups of $S(G)$ correspond to right skew
braces structures on $G$ (see~\cite{skew}). Also, this work is related
to  the  enumeration  of  Hopf-Galois structures  on  separable  field
extensions,  as C.~Greither  and B.~Pareigis  have shown~\cite{GrePar}
that these  structures can be described through  those regular subgroups
of a suitable symmetric group, that are normalised by a given regular
subgroup;   this   connection   is    exploited   in   the   work   of
L.~Childs~\cite{Chi},    N.P.~Byott~\cite{Byo},    and    Byott    and
Childs~\cite{ByoChi}.

The system for computational discrete algebra \textsf{GAP}~\cite{GAP4}
has been invaluable for gaining the 
computational evidence which led to the results of this paper.

We are very grateful to the referee for several useful suggestions. We
are very  grateful to Giovanni  Deligios for pointing out  some rather
bad misprints (which  did not affect, however, the  correctness of the
paper); these have been corrected in version 5.07.

\section{Preliminaries}
\label{sec:prelim}

We recall some standard material from~\cite{perfect}, and complement
it with a couple
of Lemmas that will be useful in the rest of the paper.

Let $G$  be a group. Denote by $S(G)$ the group of permutations of the
set $G$, under left-to-right composition.

Let
\begin{equation*}
  \begin{aligned}
    \rho :\ &G \to S(G)
    \\&g \mapsto (h \mapsto h g) 
  \end{aligned}
\end{equation*}
be the right regular representation of $G$

\begin{definition}
  The \emph{holomorph} of $G$ is
  \begin{equation*}
    \Hol(G) = N_{S(G)}(\rho(G)) = \Aut(G) \rho(G).
  \end{equation*}
\end{definition}

\begin{theorem}\label{thm:normal-regular}
Let $G$ be a finite group. The following data are equivalent.
  \begin{enumerate}
  \item\label{item:N}
    A regular subgroup $N \norm \Hol(G)$.
  \item\label{item:gamma}
    A map $\gamma : G \to \Aut(G)$ such that for $g, h \in G$ and
    $\beta \in \Aut(G)$
    \begin{equation}\label{eq:gamma-for-normal}
      \begin{cases}
        \gamma(g h) = \gamma(h) \gamma(g),\\
        \gamma(g^{\beta}) = \gamma(g)^{\beta}.
      \end{cases}
    \end{equation}
  \item\label{item:braces} 
    A group operation $\circ$ on $G$ such that
    for $g, h, k \in G$ 
    \begin{equation*}
      \begin{cases}
      (g h) \circ k = (g \circ k) k^{-1} (h \circ k),\\
        \Aut(G) \le \Aut(G, \circ).
      \end{cases}
    \end{equation*}
  \end{enumerate}
  The data of~\eqref{item:N}-\eqref{item:braces} are related as follows. 
  \begin{enumerate}[(i)]\label{item:circ}
  \item
    $
      g \circ h = g^{\gamma(h)} h
    $
    for $g, h \in G$. 
   \item 
    Each element of $N$ can be written uniquely in the form $\nu(h) =
    \gamma(h) \rho(h)$, for some $h \in G$.
 \item 
    For $g, h \in G$ one has
    $
      g^{\nu(h)} = g \circ h.
    $
  \item\label{item:nu-is-iso}
    The map 
    \begin{align*}
    \setlength\arraycolsep{1.5pt}
    \begin{matrix}
          \nu: & (G, \circ) & \to & N\\
          & h &\mapsto &\gamma(h) \rho(h)
    \end{matrix}
    \end{align*}
    is an isomorphism.    
  \end{enumerate}
\end{theorem}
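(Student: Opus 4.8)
The plan is to work entirely inside $\Hol(G) = \Aut(G)\rho(G)$, exploiting two structural facts: every element of $\Hol(G)$ factors \emph{uniquely} as $\alpha\rho(g)$ with $\alpha\in\Aut(G)$ and $g\in G$ (so that $1^{\alpha\rho(g)} = g$), and the two factors straighten past each other by the rule $\rho(g)\alpha = \alpha\rho(g^{\alpha})$. I would prove $(\ref{item:N})\Rightarrow(\ref{item:gamma})$ together with the relations (i)--(iv), then $(\ref{item:gamma})\Rightarrow(\ref{item:N})$, and finally $(\ref{item:gamma})\Leftrightarrow(\ref{item:braces})$.

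For $(\ref{item:N})\Rightarrow(\ref{item:gamma})$: since $N$ is regular, for each $h\in G$ there is a unique $\nu(h)\in N$ with $1^{\nu(h)} = h$, and $\nu\colon G\to N$ is a bijection. Writing $\nu(h)$ in canonical form forces the $\rho$-part to be $\rho(h)$, so $\nu(h) = \gamma(h)\rho(h)$; this defines $\gamma\colon G\to\Aut(G)$ (giving (ii)), and setting $g\circ h := g^{\gamma(h)}h = g^{\nu(h)}$ gives (i) and (iii). Because $N$ is a group, $\nu$ is a bijection, and $1^{\nu(g)\nu(h)} = g^{\nu(h)} = g\circ h$, the operation $\circ$ is a group law and $\nu$ is an isomorphism $(G,\circ)\to N$, which is (iv). Expanding $\nu(g)\nu(h) = \nu(g\circ h)$ with the straightening rule and comparing canonical forms yields $\gamma(g\circ h) = \gamma(g)\gamma(h)$; normality of $N$ under $\Aut(G)$ gives $\nu(h)^{\beta} = \gamma(h)^{\beta}\rho(h^{\beta})\in N$, which is forced to equal $\nu(h^{\beta})$, hence $\gamma(h^{\beta}) = \gamma(h)^{\beta}$. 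Finally, rewriting the ordinary product as $gh = g^{\gamma(h)^{-1}}\circ h$ and combining the last two identities produces $\gamma(gh) = \gamma(h)\gamma(g)$.

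For $(\ref{item:gamma})\Rightarrow(\ref{item:N})$ I would set $N = \Set{\gamma(h)\rho(h) : h\in G}$. The two conditions on $\gamma$ combine to give $\gamma(g^{\gamma(h)}h) = \gamma(g)\gamma(h)$, so straightening turns $\nu(g)\nu(h)$ into $\nu(g^{\gamma(h)}h)$; together with $\gamma(1) = \mathrm{id}$ and injectivity of $\nu$ this shows $N$ is a regular subgroup of $\Hol(G)$. Conjugating $\nu(h)$ by $\beta\in\Aut(G)$ and by $\rho(k)$ and straightening reduces normality to the two identities $\gamma(h^{\beta}) = \gamma(h)^{\beta}$ and $\gamma((k^{\gamma(h)})^{-1}hk) = \gamma(h)$, the second of which follows by a short manipulation using both conditions. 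For $(\ref{item:gamma})\Leftrightarrow(\ref{item:braces})$: given $\gamma$, the operation $g\circ h = g^{\gamma(h)}h$ is associative precisely because $\gamma(h\circ k) = \gamma(h)\gamma(k)$ (itself a consequence of the two conditions), it has two-sided identity $1$, and it has inverses since $G$ is finite; the skew identity $(gh)\circ k = (g\circ k)k^{-1}(h\circ k)$ is a one-line substitution, and $\Aut(G)\le\Aut(G,\circ)$ is immediate from $\gamma(g^{\beta}) = \gamma(g)^{\beta}$. Conversely, putting $k=1$, resp. $g=h=1$, in the skew identity shows that the identities of $(G,\cdot)$ and $(G,\circ)$ coincide, so $\gamma(h)\colon g\mapsto(g\circ h)h^{-1}$ is a well-defined automorphism of $(G,\cdot)$ with $g\circ h = g^{\gamma(h)}h$, and the two conditions on $\gamma$ follow from $\circ$-associativity, the skew identity, and $\Aut(G)\le\Aut(G,\circ)$.

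The routine parts are the verifications that $\circ$ is a group law and that $\gamma(h)$ is an automorphism. The one genuinely delicate point is the bookkeeping that relates the two products $\cdot$ and $\circ$ — concretely, deriving $\gamma(gh) = \gamma(h)\gamma(g)$ from the intrinsic relations, and checking $\rho(G)$-normality in $(\ref{item:gamma})\Rightarrow(\ref{item:N})$ — where one must keep careful track of the left-to-right composition convention and of which inverses are $\cdot$-inverses and which are $\circ$-inverses.
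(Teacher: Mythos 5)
Your proof is correct, and it follows essentially the route the paper takes: the paper does not prove this theorem itself but refers to Theorem~5.2 of the cited paper on perfect groups, recalling only the key starting point --- the unique factorization $\nu(h)=\gamma(h)\rho(h)$ inside $\Hol(G)=\Aut(G)\rho(G)$ forced by $1^{\nu(h)}=h$ --- which is exactly the skeleton you build on, and all of your individual steps (the derivation of $\gamma(gh)=\gamma(h)\gamma(g)$ via $gh=g^{\gamma(h)^{-1}}\circ h$, and the reduction of $\rho(G)$-normality to $\gamma\bigl((k^{\gamma(h)})^{-1}hk\bigr)=\gamma(h)$) check out. The only place you go beyond what the paper records is the direct verification of the equivalence with item~(3), which the paper outsources to the skew-brace literature; your argument there (the identities of $(G,\cdot)$ and $(G,\circ)$ coincide, $\gamma(h):g\mapsto(g\circ h)h^{-1}$ is an automorphism, and the two conditions on $\gamma$ follow from $\circ$-associativity and $\Aut(G)\le\Aut(G,\circ)$) is also sound.
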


This is  basically~\cite[Theorem~5.2]{perfect}; we recall  briefly the
main points. 

Recall that a  subgroup $N \le S(G)$ is \emph{regular}  if for each $h
\in  G$ there  is a  unique  $\nu(h) \in  N$ such  that $1^{\nu(h)}  =
h$. (So  $\nu : G  \to N$ is  the inverse of  the bijection $N  \to G$
given by $n  \mapsto 1^{n}$, for $n \in N$.)  Given a regular subgroup
$N \norm \Hol(G)  = \Aut(G) \rho(G)$, there  is a map $\gamma  : G \to
\Aut(G)$ such that  $\nu(h) = \gamma(h) \rho(h)$,  as $1^{\gamma(h)} =
1$.      The     equivalence    of     item~\eqref{item:braces}     of
Theorem~\ref{thm:normal-regular} with  the previous ones  follows from
the theory of skew braces~\cite{Bach-braces}.

\begin{remark}
  In   the   following,   when    discussing   a   subgroup   $N$   as
  in~\eqref{item:N}   of  Theorem~\ref{thm:normal-regular},   we  will
  use the other notation of the Theorem
  without further mention.
\end{remark}

We introduce the sets
\begin{equation*}
  \Hc(G)
  =
  \Set{ N \le S(G) : \text{$N$ is regular, $N \cong G$ and $N_{S(G)}(N)
    = \Hol(G)$} }
\end{equation*}
and
\begin{equation*}
  \Jc(G)
  =
  \Set{ N \le S(G) : \text{$N$ is regular, $N \norm \Hol(G)$} }
  \supseteq 
  \Hc(G)
\end{equation*}
As in~\cite{fgag, perfect}, for the groups $G$ we consider we first
determine $\Jc(G)$, using Theorem~\ref{thm:normal-regular}, then check
which elements of $\Jc(G)$ lie in 
$\Hc(G)$, and finally compute $T(G)$, or part of it.

G.A.~Miller has  shown~\cite{Miller-multi} that $\Hc(G)$ is  the orbit
of $\rho(G)$ under conjugation by the \emph{multiple holomorph}
\begin{equation*}
  \NHol(G) 
  = 
  N_{S(G)}(\Hol(G))
  = 
  N_{S(G)}(N_{S(G)}(\rho(G)))
\end{equation*}
of $G$, so that  the group  
\begin{equation*}
  T(G) 
  = 
  \NHol(G) / \Hol(G)
  =
  N_{S(G)}(N_{S(G)}(\rho(G))) / N_{S(G)}(\rho(G))
\end{equation*}
acts regularly on $\Hc(G)$.

An element of $S(G)$
conjugating $\rho(G)$ to $N$ can be modified by a translation, and
assumed to fix $1$.   We have, using the notation $a^{b} = b^{-1} a b$
for conjugacy in a group, the following
\begin{lemma}\label{lemma:conjugation}
  Suppose $N \in \Hc(G)$, and let $\theta \in \NHol(G)$ such that
  $\rho(G)^{\theta} = N$ and $1^{\theta} = 1$. Then
  \begin{equation*}
    \theta : G \to (G, \circ)
  \end{equation*}
  is an isomorphism such that
  \begin{equation}\label{eq:rho-theta-nu}
    \rho(g)^{\theta} = \nu(g^{\theta}) = \gamma(g^{\theta}) \rho(g^{\theta}),
  \end{equation}
  for $g \in G$.

  Conversely, an isomorphism $\theta : G \to (G, \circ)$ conjugates
  $\rho(G)$ to $N$.
\end{lemma}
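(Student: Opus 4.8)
The plan is to exploit the fact that $\Hol(G) = N_{S(G)}(\rho(G))$ and, since $N \in \Hc(G)$, also $\Hol(G) = N_{S(G)}(N)$, so that by Miller's transitivity there is some $\sigma \in \NHol(G)$ with $\rho(G)^{\sigma} = N$. First I would reduce to the normalized case: the element $1^{\sigma} = a$ for some $a \in G$, and composing $\sigma$ with $\rho(a)^{-1} \in \Hol(G)$ (which normalizes both $\rho(G)$ and $N$, hence lies in $\NHol(G)$) produces $\theta = \sigma \rho(a)^{-1} \in \NHol(G)$ with $\rho(G)^{\theta} = N$ and $1^{\theta} = 1$; this is the observation made in the sentence preceding the statement. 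So I may assume $\theta$ fixes $1$ from the start.

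Next I would show $\theta$ is an isomorphism $G \to (G,\circ)$ and establish~\eqref{eq:rho-theta-nu}. Since conjugation by $\theta$ maps $\rho(G)$ onto $N$, for each $g \in G$ the permutation $\rho(g)^{\theta}$ is an element of $N$; by regularity of $N$ it equals $\nu(h)$ for the unique $h$ with $1^{\nu(h)} = h$, and computing $1^{\rho(g)^{\theta}} = ((1^{\theta^{-1}})^{\rho(g)})^{\theta} = (1^{\rho(g)})^{\theta} = g^{\theta}$, using $1^{\theta^{-1}} = 1$. Hence $h = g^{\theta}$, giving $\rho(g)^{\theta} = \nu(g^{\theta}) = \gamma(g^{\theta})\rho(g^{\theta})$, which is~\eqref{eq:rho-theta-nu}. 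To see $\theta$ is a homomorphism $G \to (G,\circ)$, I would evaluate at $1$: for $g_{1}, g_{2} \in G$,
\begin{equation*}
  (g_{1} g_{2})^{\theta}
  = (1^{\rho(g_{1})\rho(g_{2})})^{\theta}
  = 1^{(\rho(g_{1})\rho(g_{2}))^{\theta}}
  = 1^{\rho(g_{1})^{\theta}\rho(g_{2})^{\theta}}
  = 1^{\nu(g_{1}^{\theta})\nu(g_{2}^{\theta})}
  = (g_{1}^{\theta}) \circ (g_{2}^{\theta}),
\end{equation*}
where the last step uses item~(iii) of Theorem~\ref{thm:normal-regular} together with $1^{\nu(g_{1}^{\theta})} = g_{1}^{\theta}$, so that applying $\nu(g_{2}^{\theta})$ gives $g_{1}^{\theta} \circ g_{2}^{\theta}$. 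Since $\theta$ is a bijection of the set $G$ by construction, it is an isomorphism $G \to (G,\circ)$.

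For the converse, suppose $\theta : G \to (G,\circ)$ is an isomorphism; I want $\rho(G)^{\theta} = N$. Running the computation above in reverse, for $g \in G$ and arbitrary $k \in G$ one has $k^{\rho(g)^{\theta}} = ((k^{\theta^{-1}}) g)^{\theta}$, and because $\theta$ is an isomorphism onto $(G,\circ)$ this equals $(k^{\theta^{-1}})^{\theta} \circ g^{\theta} = k \circ g^{\theta} = k^{\nu(g^{\theta})}$ by item~(iii) again; thus $\rho(g)^{\theta} = \nu(g^{\theta})$, and as $g$ ranges over $G$ so does $g^{\theta}$, whence $\rho(G)^{\theta} = \{\nu(h) : h \in G\} = N$. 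I do not need to separately check $\theta \in \NHol(G)$ here, since conjugating $\rho(G)$ to $N$ already forces $\theta$ to normalize $\Hol(G) = N_{S(G)}(\rho(G)) = N_{S(G)}(N)$.

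The only mildly delicate point is keeping straight the direction of conjugation and the left-to-right composition convention; once the identity $k^{\rho(g)^{\theta}} = ((k^{\theta^{-1}})g)^{\theta}$ is written down correctly, everything else is a direct substitution using the dictionary in Theorem~\ref{thm:normal-regular}. There is no real obstacle: this is essentially an unwinding of definitions, and the substantive content (Miller's transitivity, the structure of $N \norm \Hol(G)$) is already available.
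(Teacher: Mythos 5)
Your proof is correct: the identity $1^{\rho(g)^{\theta}} = g^{\theta}$ (using $1^{\theta^{-1}}=1$), the evaluation-at-$1$ argument for the homomorphism property via Theorem~\ref{thm:normal-regular}, and the reverse computation $k^{\rho(g)^{\theta}} = ((k^{\theta^{-1}})g)^{\theta} = k \circ g^{\theta}$ for the converse are all sound, as is the remark that $\rho(G)^{\theta}=N$ already forces $\theta \in \NHol(G)$ because $N_{S(G)}(N)=\Hol(G)$. The paper itself gives no proof here --- it only cites \cite[Lemma~4.2]{perfect} --- and your argument is exactly the standard unwinding of definitions that the cited lemma contains, so there is nothing to flag.
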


This is~\cite[Lemma~4.2]{perfect}.

The next Lemma shows that an isomorphism $\theta$ as in
Lemma~\ref{lemma:conjugation} determines $\gamma$ uniquely.
\begin{lemma}\label{lemma:from-theta-to-gamma}
  Let $N \in \Hc(G)$.
  If
  \begin{equation*}
    \theta : G \to (G, \circ),
  \end{equation*}
  is an isomorphism, then 
  \begin{equation}\label{eq:theta-gamma}
    (g h)^{\theta}
    =
    (g^{\theta})^{\gamma(h^{\theta})} h^{\theta},
  \end{equation}
  for $g, h \in G$. It follows that the  $\gamma$ associated to $N$ is given by
  \begin{equation*}
    g^{\gamma(h)}
    =
    (g^{\theta^{-1}} h^{\theta^{-1}})^{\theta} h^{-1}.
  \end{equation*}
\end{lemma}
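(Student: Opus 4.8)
The plan is to read off both assertions directly from the dictionary of Theorem~\ref{thm:normal-regular}, keeping throughout the convention of the Remark that the objects $\gamma$, $\circ$, $\nu$ attached to $N$ are fixed.

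First I would establish \eqref{eq:theta-gamma}. Since $\theta \colon G \to (G,\circ)$ is by hypothesis an isomorphism of groups, we have $(g h)^{\theta} = g^{\theta} \circ h^{\theta}$ for all $g, h \in G$, where on the left $g h$ is the product in $G$ and on the right $\circ$ is the product attached to $N$. By part~(i) of Theorem~\ref{thm:normal-regular}, this product satisfies $a \circ b = a^{\gamma(b)} b$ for $a, b \in G$; applying this with $a = g^{\theta}$ and $b = h^{\theta}$ gives exactly $(g h)^{\theta} = (g^{\theta})^{\gamma(h^{\theta})} h^{\theta}$.

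Next I would extract the explicit formula for $\gamma$ by substituting $g^{\theta^{-1}}$ for $g$ and $h^{\theta^{-1}}$ for $h$ in \eqref{eq:theta-gamma}. Since $\theta$ is a bijection of the set $G$ with inverse $\theta^{-1}$, we have $(g^{\theta^{-1}})^{\theta} = g$ and $(h^{\theta^{-1}})^{\theta} = h$, so the right-hand side becomes $g^{\gamma(h)} h$ while the left-hand side is $(g^{\theta^{-1}} h^{\theta^{-1}})^{\theta}$. Equating the two and multiplying on the right by $h^{-1}$ yields $g^{\gamma(h)} = (g^{\theta^{-1}} h^{\theta^{-1}})^{\theta} h^{-1}$, as claimed.

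There is essentially no obstacle here beyond bookkeeping: the only points needing care are keeping straight which group operation each symbol refers to (ordinary multiplication in $G$ versus $\circ$) and the convention $g^{\sigma\tau} = (g^{\sigma})^{\tau}$ for composing $\theta$ with $\theta^{-1}$ under the exponential notation. I would also note that the argument does not use the full strength of $N \in \Hc(G)$: it suffices that $N \in \Jc(G)$, so that $\gamma$ and $\circ$ exist, together with the hypothesis that $\theta$ is an isomorphism onto $(G,\circ)$.
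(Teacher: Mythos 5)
Your proof is correct and follows the same route as the paper: apply the isomorphism property $(gh)^{\theta}=g^{\theta}\circ h^{\theta}$ together with the definition $a\circ b=a^{\gamma(b)}b$ from Theorem~\ref{thm:normal-regular}, then substitute $g^{\theta^{-1}},h^{\theta^{-1}}$ to solve for $\gamma$. Your closing observation that only $N\in\Jc(G)$ is needed is accurate but incidental.
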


\begin{proof}
  The first formula comes from the definition of the operation
  $\circ$ in Theorem~\ref{thm:normal-regular}\eqref{item:circ}, and
  the second one follows immediately from it. 
\end{proof}

In  the next  lemma  we note  that  $\gamma$ is  well  defined for  the
elements of $T(G)$.
\begin{lemma}\label{lemma:theta-by-alpha}
  Let $N \in \Hc(G)$, and let  $\theta: G \to (G, \circ)$ be an
  isomorphism. 

  If $\alpha  \in \Aut(G)$, Then  $\alpha \theta$ represents  the same
  element of $T(G)$, and has the same associated $\gamma$.
\end{lemma}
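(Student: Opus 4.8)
The plan is to observe that passing from $\theta$ to $\alpha\theta$ changes neither the regular subgroup $N$ that $\theta$ singles out, nor the coset of $\Hol(G)$ that $\theta$ lies in; both assertions of the lemma then fall out of results already established.

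First I would note that $\alpha\theta \colon (G, \cdot) \to (G, \circ)$ is again an isomorphism, being the composite of the automorphism $\alpha$ of $G$ with the isomorphism $\theta$; crucially, its target is the \emph{same} operation $\circ$, namely the skew-brace operation attached to $N$ by $g^{\nu(h)} = g \circ h$. For the second assertion of the lemma this is all that is needed: Lemma~\ref{lemma:from-theta-to-gamma} expresses the map $\gamma$ attached to $N$ (which by Theorem~\ref{thm:normal-regular} is uniquely determined by $N$) through an arbitrary isomorphism $G \to (G, \circ)$ via $g^{\gamma(h)} = (g^{\theta^{-1}} h^{\theta^{-1}})^{\theta} h^{-1}$, so applying that formula with $\alpha\theta$ in place of $\theta$ produces the very same $\gamma$.

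For the first assertion I would argue at the level of $\NHol(G)$. Since $\theta$ is an isomorphism $G \to (G, \circ)$, Lemma~\ref{lemma:conjugation} gives $\rho(G)^{\theta} = N$, and then, because $N \in \Hc(G)$, conjugation by $\theta$ carries $\Hol(G) = N_{S(G)}(\rho(G))$ to $N_{S(G)}(N) = \Hol(G)$; hence $\theta \in \NHol(G)$. As $\alpha \in \Aut(G) \le \Hol(G) \le \NHol(G)$ and $\NHol(G)$ is a group, also $\alpha\theta \in \NHol(G)$. Finally $\alpha\theta = \alpha \cdot \theta$ with $\alpha \in \Hol(G)$, so $\alpha\theta$ and $\theta$ lie in the same coset of the normal subgroup $\Hol(G)$ in $\NHol(G)$, i.e.\ they represent the same element of $T(G) = \NHol(G)/\Hol(G)$.

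There is no real obstacle here; the one point to keep straight is the left-to-right composition convention, since the argument uses that $\alpha\theta$ denotes ``first $\alpha$, then $\theta$'', so that $\rho(G)^{\alpha\theta} = (\rho(G)^{\alpha})^{\theta} = \rho(G)^{\theta}$ (using $\rho(g)^{\alpha} = \rho(g^{\alpha})$, hence $\rho(G)^{\alpha} = \rho(G)$) and $\alpha\theta \in \Hol(G)\,\theta$ literally as written.
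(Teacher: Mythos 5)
Your proposal is correct and rests on the same ingredients as the paper's proof: Lemma~\ref{lemma:from-theta-to-gamma} for the statement about $\gamma$, and the observation that $\alpha \in \Aut(G) \le \Hol(G)$ for the coset statement. The only differences are cosmetic: where the paper verifies by a short direct computation that the formula of Lemma~\ref{lemma:from-theta-to-gamma} gives the same output for $\alpha\theta$ as for $\theta$, you instead note that $\alpha\theta$ is again an isomorphism onto the same $(G,\circ)$, hence associated to the same $N$, and that $\gamma$ is determined by $N$ alone --- a slightly cleaner route to the same conclusion; and you spell out the first assertion, which the paper treats as immediate.
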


\begin{proof}
  It suffices to prove the second statement, which follows from the
  fact that the $\gamma$ associated to $\alpha \theta$ is given,
  according to Lemma~\ref{lemma:from-theta-to-gamma}, by
  \begin{align*}
    (g^{(\alpha \theta)^{-1}} h^{(\alpha \theta)^{-1}})^{\alpha \theta} h^{-1}
    &=
    (g^{\theta^{-1} \alpha^{-1}} h^{\theta^{-1}
      \alpha^{-1}})^{^{\alpha \theta}} h^{-1}
    \\&=
    (g^{\theta^{-1}} h^{\theta^{-1}})^{\theta}
    h^{-1}
    \\&=
    g^{\gamma(h)}.
  \end{align*}
\end{proof}


For a group $G$, denote by $\iota$ the morphism
\begin{align*}
  \iota :\ &G \to \Aut(G)\\
           &g \mapsto (h \mapsto h^{g} = g^{-1} h g)
\end{align*}
that maps $g \in G$ to the inner automorphism $\iota(g)$ it
induces. The following Lemma is proved in~\cite[Section~6]{perfect}.
\begin{lemma}\label{lemma:formulas}
  Let $G$ be a group, and let $N \norm \Hol(G)$ be a regular subgroup
  of $S(G)$. 

  The following general formulas hold in $\Aut(G) G$, for $g, h \in G$
  and $\beta \in \Aut(G)$.
  \begin{enumerate}
  \item\label{item:self-adjoint-on-autos}
    $\gamma([\beta, g^{-1}]) = [\gamma(g), \beta]$,
  \item\label{item:gamma-of-commutators}
      $\gamma([h, g^{-1}]) = \iota([\gamma(g), h])$,
  \end{enumerate}
\end{lemma}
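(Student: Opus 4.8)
The plan is to prove both identities by direct computation inside the abstract holomorph $\Aut(G) G$, using only the functional equations~\eqref{eq:gamma-for-normal} satisfied by $\gamma$: that it reverses products, $\gamma(g h) = \gamma(h) \gamma(g)$, and that it is equivariant, $\gamma(g^{\beta}) = \gamma(g)^{\beta}$ for $\beta \in \Aut(G)$. As a preliminary I would record that setting $g = h = 1$ gives $\gamma(1) = \gamma(1)^{2}$, hence $\gamma(1) = \mathrm{id}$, and therefore $\gamma(g^{-1}) = \gamma(g)^{-1}$ for every $g$. Throughout I use $a^{b} = b^{-1} a b$ and the fact that conjugation by $\beta \in \Aut(G)$ sends $\rho(g)$ to $\rho(g^{\beta})$, so that in $\Aut(G) G$ the commutator $[\beta, g^{-1}]$ is the element $g^{\beta} g^{-1}$ of $G$.

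For~\eqref{item:self-adjoint-on-autos}: apply $\gamma$ to $[\beta, g^{-1}] = g^{\beta} g^{-1}$ and chain the facts above:
\[
  \gamma([\beta, g^{-1}])
  = \gamma(g^{\beta} g^{-1})
  = \gamma(g^{-1}) \gamma(g^{\beta})
  = \gamma(g)^{-1} \gamma(g)^{\beta}
  = [\gamma(g), \beta].
\]

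For~\eqref{item:gamma-of-commutators}: I would deduce it from~\eqref{item:self-adjoint-on-autos} by specialising $\beta = \iota(h)$. Since the commutators $[\iota(h), g^{-1}]$ and $[h, g^{-1}]$ agree as elements of $G$ --- both equal $h^{-1} g h g^{-1}$ --- part~\eqref{item:self-adjoint-on-autos} yields $\gamma([h, g^{-1}]) = [\gamma(g), \iota(h)]$. It then remains to identify $[\gamma(g), \iota(h)]$ with $\iota([\gamma(g), h])$, and for this I use the standard identity $\iota(x)^{\alpha} = \iota(x^{\alpha})$ for $\alpha \in \Aut(G)$, $x \in G$, together with $\iota$ being a homomorphism:
\[
  [\gamma(g), \iota(h)]
  = \bigl(\iota(h)^{-1}\bigr)^{\gamma(g)}\, \iota(h)
  = \iota\bigl((h^{\gamma(g)})^{-1}\bigr)\, \iota(h)
  = \iota\bigl((h^{\gamma(g)})^{-1} h\bigr)
  = \iota([\gamma(g), h]),
\]
using $[\gamma(g), h] = (h^{\gamma(g)})^{-1} h$ in $\Aut(G) G$. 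Alternatively,~\eqref{item:gamma-of-commutators} can be obtained directly by writing $[h, g^{-1}] = (h^{-1} g h) g^{-1}$, applying $\gamma$ via its product-reversing property and equivariance to reach $\gamma(g)^{-1} \gamma(g)^{\iota(h)} = [\gamma(g), \iota(h)]$, and then converting as above.

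The computations themselves are routine; the points needing care are the bookkeeping imposed by left-to-right composition --- which makes $\gamma$ an anti-homomorphism rather than a homomorphism --- and applying $\iota(x)^{\alpha} = \iota(x^{\alpha})$ with the correct variance, so that the commutator $[\gamma(g), \iota(h)]$ in $\Aut(G)$ collapses to the inner automorphism induced by the element $[\gamma(g), h]$ of $G$.
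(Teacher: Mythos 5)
Your proof is correct and follows exactly the route the paper indicates: the paper itself defers the verification to \cite[Section~6]{perfect} and explicitly remarks that item~(2) is the instance $\beta = \iota(h)$ of item~(1), which is precisely how you derive it. All the individual steps --- $\gamma(1)=1$ and $\gamma(g^{-1})=\gamma(g)^{-1}$ from the anti-homomorphism property, $[\beta, g^{-1}] = g^{\beta}g^{-1}$ in $\Aut(G)G$, the equivariance $\gamma(g^{\beta})=\gamma(g)^{\beta}$, and the collapse $[\gamma(g),\iota(h)] = \iota([\gamma(g),h])$ via $\iota(x)^{\alpha}=\iota(x^{\alpha})$ --- check out under the paper's left-to-right composition and commutator conventions.
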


Note that~\eqref{item:gamma-of-commutators} is an instance
of~\eqref{item:self-adjoint-on-autos}, for $\beta = \iota(h)$.

\section{Some basic tools}
\label{sec:tools}

If $G$ is a group of nilpotence class two, we will use repeatedly the
standard identity
\begin{equation*}
  (g h)^{n} = g^{n} h^{n} [h, g]^{\binom{n}{2}},
\end{equation*}
valid for $g , h \in G$ and $n \in \N$ \cite[5.3.5]{Rob1996}.

We write $\Aut_{c}(G)$ for the group of central automorphisms of $G$,
that is
\begin{equation*}
  \Aut_{c}(G)
  =
  \Set{ \alpha \in \Aut(G) : [G, \alpha] \subseteq Z(G) },
\end{equation*}
where the commutator is taken in $\Aut(G) G$.

\begin{lemma}\label{lemma:eqcond}
  Let $G$ be a finite $p$-group of class two, and $N \norm \Hol(G)$ a
  regular subgroup.

  The following are equivalent.
  \begin{enumerate}
  \item\label{item:1}
    $G' \le \ker(\gamma)$, that is, $\gamma(G') = 1$,
  \item\label{item:2}
    $\gamma(G)$ is abelian,
  \item\label{item:3}
    $[\gamma(G), G] \le Z(G)$, that is, $\gamma(G) \le
    \Aut_{c}(G)$,
  \item\label{item:4}
    $[\gamma(G), G] \le \ker(\gamma)$.
  \end{enumerate}
  Moreover, these conditions imply $[G', \gamma(G)] = 1$.
\end{lemma}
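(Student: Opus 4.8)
The plan is to exploit that, by the first identity in~\eqref{eq:gamma-for-normal}, $\gamma$ is an \emph{anti}-homomorphism from $G$ to $\Aut(G)$: in particular $\gamma(1)=1$ and $\gamma(g^{-1})=\gamma(g)^{-1}$, the image $\gamma(G)$ is a subgroup of $\Aut(G)$, $\ker(\gamma)\norm G$, and $\gamma$ carries commutators of $G$ to commutators of $\gamma(G)$ and conversely, so that $\gamma(G')=\gamma(G)'$. Since condition~\eqref{item:1} says $\gamma(G')=1$ and condition~\eqref{item:2} says $\gamma(G)'=1$, this already gives \eqref{item:1}$\Leftrightarrow$\eqref{item:2} with no further work.

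For the remaining equivalences I would lean on Lemma~\ref{lemma:formulas}. First, since $\gamma(h)\in\Aut(G)$ normalises $\rho(G)$, every commutator $[\gamma(h),g]$ formed in $\Aut(G)G$ in fact lies in $G$, so conditions~\eqref{item:3} and~\eqref{item:4} really are statements about subgroups of $G$; moreover, as $g$ ranges over $G$ (hence so does $g^{-1}$), the elements $[\gamma(h),g^{-1}]$ generate $[\gamma(h),G]$, while the elements $[h,g^{-1}]$ generate $G'$. Now Lemma~\ref{lemma:formulas}\eqref{item:gamma-of-commutators} reads $\gamma([h,g^{-1}])=\iota([\gamma(g),h])$, and $\ker(\iota)=Z(G)$; hence $\gamma$ kills all of $G'$ (condition~\eqref{item:1}) if and only if $\iota$ kills every $[\gamma(g),h]$, i.e.\ $[\gamma(G),G]\le Z(G)$, which is condition~\eqref{item:3}. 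Likewise Lemma~\ref{lemma:formulas}\eqref{item:self-adjoint-on-autos} with $\beta=\gamma(h)$ gives $\gamma([\gamma(h),g^{-1}])=[\gamma(g),\gamma(h)]$, so $[\gamma(G),G]\le\ker(\gamma)$ (condition~\eqref{item:4}) holds if and only if $[\gamma(g),\gamma(h)]=1$ for all $g,h\in G$, i.e.\ $\gamma(G)$ is abelian (condition~\eqref{item:2}). Chaining these equivalences, \eqref{item:1}--\eqref{item:4} are all equivalent.

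For the last assertion I would use condition~\eqref{item:3} in the shape $\gamma(G)\le\Aut_{c}(G)$ and check that a central automorphism $\alpha$ of a group $G$ of class two fixes $G'$ pointwise. The $\alpha$-fixed elements of $G$ form a subgroup, so it suffices to verify this on a commutator $[a,b]$: writing $a^{\alpha}=au$, $b^{\alpha}=bv$ with $u,v\in[G,\alpha]\le Z(G)$, one gets $[a,b]^{\alpha}=[au,bv]=[a,b]$, since in class two central factors may be discarded from a commutator. Therefore $[G',\gamma(G)]=1$.

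The one genuinely load-bearing point is recognising Lemma~\ref{lemma:formulas}\eqref{item:gamma-of-commutators} as the exact bridge between ``$\gamma$ vanishes on $G'$'' and ``$[\gamma(G),G]$ is central''; after that, everything is bookkeeping with the anti-homomorphism property and with the fact that $\Aut(G)$ normalises $\rho(G)$. The small things to be careful about are that the chosen commutator sets really generate $G'$ and $[\gamma(G),G]$ — so that ``$\gamma$ (resp.\ $\iota$) kills a generating set'' upgrades to ``kills the whole subgroup'' — and the left/right conventions when passing between commutators in $\Aut(G)G$ and elements of $G$. The hypothesis of class two is used in an essential way only in the last paragraph.
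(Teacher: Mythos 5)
Your proof is correct and follows essentially the same route as the paper: \eqref{item:1}$\Leftrightarrow$\eqref{item:2} via the anti-homomorphism property, \eqref{item:1}$\Leftrightarrow$\eqref{item:3} via Lemma~\ref{lemma:formulas}\eqref{item:gamma-of-commutators}, \eqref{item:2}$\Leftrightarrow$\eqref{item:4} via Lemma~\ref{lemma:formulas}\eqref{item:self-adjoint-on-autos} with $\beta=\gamma(h)$, and the final claim from the standard fact that central automorphisms centralise the derived subgroup. The only quibble is your closing remark: the computation $[au,bv]=[a,b]$ for central $u,v$ holds in any group, so class two is not actually needed there either.
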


\begin{proof}
  \eqref{item:1} and \eqref{item:2} are clearly equivalent.

  Lemma~\ref{lemma:formulas}\eqref{item:gamma-of-commutators} yields
  that $\gamma(G') = 1$ iff $[G, \gamma(G)] \le Z(G)$, that is,
  \eqref{item:1} is equivalent to~\eqref{item:3}.

  Setting $\beta = \gamma(h)$
  in~\ref{lemma:formulas}\eqref{item:self-adjoint-on-autos}, 
  for $h \in G$, we get 
  \begin{align*}
    \gamma( [\gamma(h), g^{-1}] )
    =
    [\gamma(g), \gamma(h)],
  \end{align*}
  which shows that \eqref{item:2} and~\eqref{item:4} are equivalent.

  As to the last statement, it is a well known and elementary fact that central
  automorphisms centralise the derived subgroup.
\end{proof}

We  now   introduce  a  linear   technique  that  will   simplify  the
calculations in the next sections. Here  and in the following, we will
occasionally employ additive notation  for the abelian groups $G/Z(G),
G/G'$ and $Z(G)$.
\begin{proposition}\label{prop:Delta}
  Let $G$ be a finite $p$-group of class two, for $p > 2$.

  There is a one-to-one correspondence between
  \begin{enumerate}
  \item\label{item:gamma-setting} the regular subgroups $N \norm \Hol(G)$ 
  such that
  \begin{enumerate}
  \item\label{item:a-assumption} $\gamma(G) \le \Aut_{c}(G)$, and
  \item\label{item:b-assumption} $[Z(G), \gamma(G)] = 1$,
  \end{enumerate}
  and
  \item\label{item:Delta}
    the bilinear maps
    \begin{align*}
      \Delta : G/Z(G) \times G/G' \to Z(G)
    \end{align*}
    such that   
    \begin{equation}\label{eq:Delta-beta}
    \Delta(g^{\beta}, h^{\beta}) = \Delta(g, h)^{\beta}
  \end{equation}
  for $g \in G/Z(G)$, $h \in G/G'$ and $\beta \in \Aut(G)$.
  \end{enumerate}
  The correspondence is given by
  \begin{equation}\label{eq:gamma-and-Delta}
    \Delta(g Z(G), h G') = [g, \gamma(h)],
  \end{equation}
  for $g, h \in G$.
\end{proposition}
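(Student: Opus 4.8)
The plan is to construct explicit maps in both directions between the two sets and to check that they are mutually inverse. Throughout I write $[g, \gamma(h)] = g^{-1} g^{\gamma(h)} \in G$ for the commutator computed inside $\Aut(G) G$, so that~\eqref{eq:gamma-and-Delta} reads $\Delta(g Z(G), h G') = g^{-1} g^{\gamma(h)}$.

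\emph{From $N$ to $\Delta$.} Given a regular $N \norm \Hol(G)$ with $\gamma(G) \le \Aut_{c}(G)$ and $[Z(G), \gamma(G)] = 1$, I would define $\Delta$ by~\eqref{eq:gamma-and-Delta} and first check that it descends to the indicated quotients. By Lemma~\ref{lemma:eqcond} the hypothesis $\gamma(G) \le \Aut_{c}(G)$ is equivalent to $\gamma(G') = 1$, so (as $\gamma$ reverses products) $\gamma(hc) = \gamma(c)\gamma(h) = \gamma(h)$ for $c \in G'$, and the value depends only on $hG'$; and since $\gamma(h)$ fixes $Z(G)$ pointwise while $[g, \gamma(h)] \in Z(G)$, the identity $[ab,c] = [a,c]^{b}[b,c]$ gives $[gz, \gamma(h)] = [g, \gamma(h)]$ for $z \in Z(G)$, so the value depends only on $g Z(G)$; it lies in $Z(G)$ because $\gamma(h) \in \Aut_{c}(G)$. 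Bilinearity follows from the commutator identities $[ab,c] = [a,c]^{b}[b,c]$ and $[a,bc] = [a,c][a,b]^{c}$ in $\Aut(G) G$, using $\gamma(h_{1}h_{2}) = \gamma(h_{2})\gamma(h_{1})$ and the fact that all the commutators that appear are central (hence fixed under conjugation by elements of $G$ and by $\gamma(G)$); and~\eqref{eq:Delta-beta} is immediate from $\gamma(g^{\beta}) = \gamma(g)^{\beta}$ and the fact that conjugation by $\beta$ is an automorphism of $\Aut(G) G$.

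\emph{From $\Delta$ to $N$.} Conversely, given a bilinear $\Delta$ satisfying~\eqref{eq:Delta-beta}, define, for each $h \in G$, a self-map $\gamma(h)$ of $G$ by $g^{\gamma(h)} = g \cdot \Delta(g Z(G), h G')$. Since $\Delta$ is additive in its first argument and takes central values, $\gamma(h)$ is an endomorphism of $G$; it is bijective, the inverse being $g \mapsto g \cdot \Delta(g Z(G), h G')^{-1}$ (note $g$ and $g^{\gamma(h)}$ lie in the same coset of $Z(G)$), so $\gamma(h) \in \Aut(G)$, and $\gamma(h)$ fixes $Z(G)$ pointwise. I would then verify the two conditions of Theorem~\ref{thm:normal-regular}\eqref{item:gamma}: $\gamma(gh) = \gamma(h)\gamma(g)$ follows from additivity of $\Delta$ in its second argument together with $\gamma(g)$ fixing the central values, and $\gamma(g^{\beta}) = \gamma(g)^{\beta}$ follows from~\eqref{eq:Delta-beta}. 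Thus $\gamma$ defines a regular $N \norm \Hol(G)$, which satisfies both hypotheses since $[G, \gamma(h)] = \Set{ \Delta(g Z(G), h G') : g \in G } \subseteq Z(G)$ and $[Z(G), \gamma(h)] = 1$.

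\emph{Mutual inverseness, and the main point.} The two constructions invert one another by inspection: starting from $N$ one has $g^{\gamma(h)} = g \cdot [g, \gamma(h)] = g \cdot \Delta(g Z(G), h G')$, which is exactly the recipe used to rebuild $\gamma$ from $\Delta$; and starting from $\Delta$, the $\gamma$ so built satisfies $[g, \gamma(h)] = g^{-1}\bigl( g\, \Delta(g Z(G), h G') \bigr) = \Delta(g Z(G), h G')$. Since by Theorem~\ref{thm:normal-regular} a regular subgroup $N \norm \Hol(G)$ is the same datum as its associated $\gamma$, this gives the asserted one-to-one correspondence. I do not expect a genuine obstacle here: the content of the statement is the \emph{linearisation} itself --- that under (a) and (b) the map $\gamma$ is completely encoded by a single bilinear form --- and the rest is routine verification; the only place that needs care is the bookkeeping around well-definedness on $G/Z(G)$ and $G/G'$ and the repeated use of centrality to move commutators past other factors before applying the commutator identities.
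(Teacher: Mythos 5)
Your proposal is correct and follows essentially the same route as the paper's proof: define $\Delta$ by~\eqref{eq:gamma-and-Delta}, use Lemma~\ref{lemma:eqcond} and the two hypotheses to check well-definedness on the quotients, bilinearity and $\Aut(G)$-equivariance, and conversely rebuild $\gamma$ from $\Delta$ and verify the conditions of Theorem~\ref{thm:normal-regular}\eqref{item:gamma}. The only presentational difference is that you make the mutual inverseness and the bijectivity of $\gamma(h)$ explicit, which the paper leaves implicit; there is no substantive divergence.
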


\begin{remark}\label{rem:commutator-map}
  Clearly the commutator maps
  \begin{equation*}
    (g Z(G), h G') \mapsto [g, h]^{c},
  \end{equation*}
  for    some     fixed integer   $c$,    satisfy     the    conditions
  of~\eqref{item:Delta}. See Section~\ref{sec:powers} for a discussion
  of the corresponding regular subgroups.
\end{remark}

\begin{proof}[Proof of Proposition~\ref{prop:Delta}]
  Let  us start  with  the  setting of~\eqref{item:gamma-setting}.  By
  assumption~\eqref{item:a-assumption}, and Lemma~\ref{lemma:eqcond},
  we have $[G, \gamma(G)] \le Z(G)$. 

  If $z \in Z(G)$ we have
  \begin{equation*}
    [g z, \gamma(h)] 
    =
    [g, \gamma(h)]^{z} [z, \gamma(h)]
    =
    [g, \gamma(h)],
  \end{equation*}
  as by assumption~\eqref{item:b-assumption} $[Z(G), \gamma(G)] =
  1$. If $w \in G'$ we have 
  \begin{equation*}
    [g, \gamma(h w)] 
    =
    [g, \gamma(w) \gamma(h)]
    =
    [g, \gamma(h)],
  \end{equation*}
  as assumption~\eqref{item:a-assumption} implies $G' \le
  \ker(\gamma)$ by Lemma~\ref{lemma:eqcond}. Therefore the map 
  \begin{align*}
    \Delta : G/Z(G) \times G/G' \to Z(G)
  \end{align*}
  of~\eqref{eq:gamma-and-Delta},  induced  by   $(g,  h)  \mapsto  [g,
    \gamma(h)]$, is well defined.

  We now prove  that $\Delta$ is bilinear. 
  For $g, h \in G/Z(G)$, and $k \in G/G'$ we have
  \begin{align*}
    \Delta(g + h, k) 
    &=
          [g + h, \gamma(k)]
          \\&=
            [g, \gamma(k)]^{h} + [h, \gamma(k)]
            \\&=
              [g, \gamma(k)] + [h, \gamma(k)]
  \\&=
  \Delta(g, k) + \Delta(h, k),
  \end{align*}
  since $[G, \gamma(G)] \le Z(G)$. For $g \in G/Z(G)$ and $h, k \in
  G/G'$ we have
  \begin{align*}
    \Delta(g, h + k)
    &=
          [g, \gamma(h + k)]
          \\&=
            [g, \gamma(k) + \gamma(h)]
            \\&=
              [g, \gamma(h)] + [g, \gamma(k)]^{\gamma(h)}
              \\&=
                [g, \gamma(h)] + [g, \gamma(k)]
                \\&=
                \Delta(g, h) + \Delta(g, k),
  \end{align*}
  since $[G, \gamma(G)] \le Z(G)$, and $[Z(G), \gamma(G)] = 1$.
  
  To prove~\eqref{eq:Delta-beta} we compute, for $g \in G/Z(G)$, $h
  \in G/G'$ and $\beta \in \Aut(G)$,
  \begin{align*}
    \Delta(g^{\beta}, h^{\beta})
    &=
          [g^{\beta}, \gamma(h^{\beta})]
          \\&=
            [g^{\beta}, \gamma(h)^{\beta}]
            \\&=
            g^{-\beta} (g^{\beta})^{\beta^{-1} \gamma(h) \beta}
            \\&=
            g^{-\beta} g^{\gamma(h) \beta}
            \\&=
              [g, \gamma(h)]^{\beta}
            \\&=
            \Delta(g, h)^{\beta}.
  \end{align*}

  Conversely, let $\Delta$ be as in~\eqref{item:Delta}, and define
  a function $\gamma$ on $G$ via~\eqref{eq:gamma-and-Delta}, that is
  \begin{equation*}
    g^{\gamma(h)} = g \cdot \Delta(g Z(G), h G').
  \end{equation*}
  The fact that $\Delta$ is linear in the first component, and takes
  values in the centre, 
  implies that 
  $\gamma(h) \in \Aut(G)$ for every $h \in G$. The fact that $\Delta$
  is linear in the second component implies that $\gamma : G \to
  \Aut(G)$ is a morphism.  It now follows immediately
  from~\eqref{eq:gamma-and-Delta} that $[G,
    \gamma(G)] \le Z(G)$, so that by
  Lemma~\ref{lemma:eqcond} $\gamma$
  satisfies~\eqref{item:a-assumption} and the
  first  condition of~\eqref{eq:gamma-for-normal}. Also, $[Z(G),
    \gamma(G)] = 1$, so that  
  $\gamma$
  satisfies~\eqref{item:b-assumption}.
  Moreover, \eqref{eq:Delta-beta} implies that
  \begin{align*}
    g^{\gamma(h)^{\beta}}
    =
    g^{\beta^{-1} \gamma(h) \beta}
    =
    (g^{\beta^{-1}} \Delta(g^{\beta^{-1}} Z(G), h G'))^{\beta}
    =
    g \Delta(g Z(G), h^{\beta} G')
    =
    g^{\gamma(h^{\beta})},
  \end{align*}
  that is, the second condition of~\eqref{eq:gamma-for-normal} holds.
\end{proof}

We now record two consequences of Proposition~\ref{prop:Delta}
concerning commutators and powers.

\begin{lemma}\label{lemma:commutator-of-nu}
  Let $G$ be a finite $p$-group of class two, and $N \norm \Hol(G)$ a
  regular subgroup.

  Suppose
  \begin{enumerate}
  \item $\gamma(G) \le \Aut_{c}(G)$, and
  \item $[Z(G), \gamma(G)] = 1$.
  \end{enumerate}

  Then for $g, h \in G$ we have
  \begin{equation}\label{eq:the-id}
    \begin{aligned}\null
      [\nu(g), \nu(h)] 
      &=
      \nu( [g, h] [g, \gamma(h)] [h, \gamma(g)]^{-1} )
      \\&=
      \nu([g, h] + \Delta(g, h) - \Delta(h, g)).
    \end{aligned}
  \end{equation}
\end{lemma}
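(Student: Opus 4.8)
The plan is to compute $[\nu(g),\nu(h)]$ directly in the group $\Aut(G)G$, using the factorisation $\nu(g)=\gamma(g)\rho(g)$ from Theorem~\ref{thm:normal-regular}, and then recognise the result as $\nu$ of an explicit element. First I would write out
\begin{equation*}
  [\nu(g),\nu(h)] = \nu(g)^{-1}\nu(h)^{-1}\nu(g)\nu(h)
  = \rho(g)^{-1}\gamma(g)^{-1}\,\rho(h)^{-1}\gamma(h)^{-1}\,\gamma(g)\rho(g)\,\gamma(h)\rho(h),
\end{equation*}
and then move all the $\gamma$-factors to the left (or the $\rho$-factors to the right) using the semidirect product relation $\rho(k)\beta = \beta\,\rho(k^{\beta^{-1}})$ (equivalently $\beta^{-1}\rho(k)\beta = \rho(k^{\beta})$), together with the commutation rules for $\gamma$. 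The key simplifications are the hypotheses: since $\gamma(G)\le\Aut_c(G)$, Lemma~\ref{lemma:eqcond} gives $G'\le\ker\gamma$, so $\gamma$ is a homomorphism on $G/G'$ and $\gamma(g),\gamma(h)$ commute (condition~\eqref{item:2}); and since $[Z(G),\gamma(G)]=1$ together with $[G,\gamma(G)]\le Z(G)$, the image $\gamma(G)$ centralises $[G,\gamma(G)]$, so second-order commutator corrections vanish. This should collapse the expression to $\gamma(\text{something})\rho(\text{something})$.

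The cleanest route is probably to compute $\nu(g)\nu(h)$ first. We have $\nu(g)\nu(h) = \gamma(g)\rho(g)\gamma(h)\rho(h) = \gamma(g)\gamma(h)\,\rho(g)^{\gamma(h)}\rho(h)$. Now $\rho(g)^{\gamma(h)}$ is conjugation of a right translation by an automorphism, so it equals $\rho(g^{\gamma(h)})$; and $g^{\gamma(h)} = g\cdot[g,\gamma(h)] = g\cdot\Delta(g,h)$ with $\Delta(g,h)\in Z(G)$. Hence $\rho(g^{\gamma(h)})\rho(h) = \rho(g\,\Delta(g,h)\,h) = \rho(g h\,[h,\Delta(g,h)]^{?}\cdots)$ — but $\Delta(g,h)$ is central, so this is simply $\rho(gh\,\Delta(g,h))$, i.e. $\rho((gh)^{\gamma}\cdots)$ up to the operation $\circ$. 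Using $\gamma(g)\gamma(h) = \gamma(hg) = \gamma(gh)$ (the last equality because $\gamma$ kills $G'$), we recognise $\nu(g)\nu(h) = \gamma(gh)\,\rho(gh\cdot\Delta(g,h))$. Comparing with $\nu(k) = \gamma(k)\rho(k)$ forces $k$ to satisfy $\gamma(k)=\gamma(gh)$ and $\rho(k\,\Delta(k,h)\cdots)$-type bookkeeping; the point is that $\nu$ is a bijection $G\to N$, so I just need to identify which $k$ lands on $\gamma(gh)\rho(gh\,\Delta(g,h))$. Since $\Delta(g,h)$ is central and killed by $\gamma$, one checks $\nu(gh\,\Delta(g,h)) = \gamma(gh)\rho(gh\,\Delta(g,h))$, giving $\nu(g)\nu(h) = \nu(gh\,\Delta(g,h))$. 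Doing the same for $\nu(h)\nu(g) = \nu(hg\,\Delta(h,g))$ and then forming the commutator in $(G,\circ)$ — recalling from Lemma~\ref{lemma:from-theta-to-gamma}/Theorem~\ref{thm:normal-regular} that $\nu$ is an isomorphism $(G,\circ)\to N$, so $[\nu(g),\nu(h)] = \nu([g,h]_\circ)$ — reduces everything to computing the commutator $[g,h]_\circ$ in the skew brace, which by the centrality of the $\Delta$-terms equals $[g,h]\cdot\Delta(g,h)\cdot\Delta(h,g)^{-1}$, i.e. $[g,h]+\Delta(g,h)-\Delta(h,g)$ in additive notation. The translation between the two asserted forms of the answer is exactly the definition $\Delta(g,h)=[g,\gamma(h)]$ and the identity $[h,\gamma(g)]^{-1}=-\Delta(h,g)$.

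I expect the main obstacle to be keeping the bookkeeping honest: one must be careful that $\rho(g)^{\gamma(h)} = \rho(g^{\gamma(h)})$ uses the \emph{correct} convention for the action (right regular representation under left-to-right composition, as fixed in the preliminaries), and that when pushing $\Delta$-terms through $\rho$ one genuinely uses their centrality rather than silently assuming commutativity of $G$. A secondary subtlety is justifying $\gamma(gh)=\gamma(hg)$ and the vanishing of the would-be correction term $[h,\gamma(g),\ldots]$: both follow from the hypotheses via Lemma~\ref{lemma:eqcond} (so $G'\le\ker\gamma$) and from $[Z(G),\gamma(G)]=1$ (so the central values $\Delta(g,h)$ are invariant under $\gamma(G)$ and under conjugation by $G$), and I would flag these explicitly rather than let them pass. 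Once those two points are nailed down, the computation is a short and essentially forced manipulation, and the two displayed expressions for $[\nu(g),\nu(h)]$ are literally the same element written multiplicatively and then additively.
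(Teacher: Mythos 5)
Your proposal is correct and takes essentially the same route as the paper: the paper likewise uses that an element of the regular subgroup $N$ is determined by the image of $1$, expands $1^{[\nu(g),\nu(h)]}$ from $\nu(k)=\gamma(k)\rho(k)$, and simplifies using exactly the two facts you flag, namely $[G,\gamma(G)]\le Z(G)$ and the commutativity of $\gamma(G)$ (equivalently $G'\le\ker\gamma$ and $[G,\gamma(G)]\le\ker\gamma$) from Lemma~\ref{lemma:eqcond}. The only difference is organizational: you first record $\nu(g)\nu(h)=\nu\bigl(gh\,\Delta(g,h)\bigr)$ and then compute the commutator in $(G,\circ)$, whereas the paper evaluates the commutator word at $1$ in a single pass.
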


\begin{proof}
  Note that $\nu(g)^{-1} : k \mapsto k^{\gamma(g)^{-1}}
  g^{-\gamma(g)^{-1}}$.
  
  Since an element $\nu(x)$ of the subgroup $N$ is determined by the
  element $x$ to which it takes $1$, it suffices to compute the
  following
  \begin{align*}
    1^{[\nu(g), \nu(h)]}
    &=
    ((((( g^{-1})^{\gamma(g)^{-1}})^{\gamma(h)^{-1}}
    (h^{-1})^{\gamma(h)^{-1}})^{\gamma(g)} x)^{\gamma(h)}) y
    \\&=
    (g^{-1})^{[\gamma(g), \gamma(h)]}
    (h^{-1})^{\gamma(h)^{-1} \gamma(g) \gamma(h)}
    g^{\gamma(h)}
    h
    \\&=
    g^{-1}
    (h^{-1})^{\gamma(g)}
    g^{\gamma(h)}
    h
    \\&=
    [g, h] [h^{-1}, \gamma(g)] [g, \gamma(h)]
    \\&=
    [g, h] [g, \gamma(h)] [h, \gamma(g)]^{-1} ,
  \end{align*}
  where  we have  used  the facts  that $[G,  \gamma(G)]  \le Z(G)$  and
  $\gamma(G)$ is abelian, according to Lemma~\ref{lemma:eqcond}.
\end{proof}

\begin{lemma}\label{lemma:powers}
  Let $G$ be a finite $p$-group of class two, and $N \norm \Hol(G)$ a
  regular subgroup.
  
  Suppose
  \begin{enumerate}
  \item $\gamma(G) \le \Aut_{c}(G)$, and
  \item $[Z(G), \gamma(G)] = 1$.
  \end{enumerate}

  Then for $g \in G$ we have
  \begin{align*}
    \nu(g)^{n} 
    = 
    \nu( (g^{n})^{\gamma(g^{(n-1)/2})})
    =
    \nu( g^{n} \cdot \Delta(g, g)^{\binom{n}{2}} )
    .
  \end{align*}
  In particular, the elements of $G$ retain their orders under $\nu$.
\end{lemma}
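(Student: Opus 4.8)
The plan is to compute $\nu(g)^n$ directly by iterating the action on the identity element $1 \in G$, exactly as in the proof of Lemma~\ref{lemma:commutator-of-nu}. Recall that an element of the regular subgroup $N$ is determined by the point to which it sends $1$, so it suffices to show $1^{\nu(g)^n} = (g^n)^{\gamma(g^{(n-1)/2})}$. First I would record that $1^{\nu(g)} = g$ and, more generally, $x^{\nu(g)} = x^{\gamma(g)} \cdot g$ by Theorem~\ref{thm:normal-regular}. Iterating, $1^{\nu(g)^n}$ is obtained by applying $x \mapsto x^{\gamma(g)} g$ a total of $n$ times starting from $1$; since $\gamma(g)$ is an automorphism, this unwinds to $1^{\nu(g)^n} = g^{\gamma(g)^{n-1}} \cdot g^{\gamma(g)^{n-2}} \cdots g^{\gamma(g)} \cdot g$, a product of $n$ conjugates of $g$ by powers of $\gamma(g)$.

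Next I would exploit the hypotheses. Since $\gamma(g) \in \Aut_c(G)$, we have $[G, \gamma(g)] \le Z(G)$, so $g^{\gamma(g)} = g \cdot \Delta(g,g)$ with $\Delta(g,g) \in Z(G)$; and since $[Z(G), \gamma(G)] = 1$, the central element $\Delta(g,g)$ is fixed by $\gamma(g)$, giving $g^{\gamma(g)^k} = g \cdot \Delta(g,g)^k$ for every $k \ge 0$ by a trivial induction. Substituting into the product above, all the $\Delta(g,g)^k$ are central and commute with everything, so
\begin{equation*}
  1^{\nu(g)^n}
  =
  \prod_{k=0}^{n-1} \bigl( g \cdot \Delta(g,g)^{k} \bigr)
  =
  g^{n} \cdot \Delta(g,g)^{\,0 + 1 + \dots + (n-1)}
  =
  g^{n} \cdot \Delta(g,g)^{\binom{n}{2}}.
\end{equation*}
Finally I would identify $g^{n} \cdot \Delta(g,g)^{\binom{n}{2}}$ with $(g^n)^{\gamma(g^{(n-1)/2})}$: since $\gamma$ is a homomorphism, $\gamma(g^{m}) = \gamma(g)^{m}$, and because $g^n$ differs from an element of $Z(G)$ according to the class-two identity — more simply, $[g^n, \gamma(g^m)] = [g, \gamma(g)]^{nm} = \Delta(g,g)^{nm}$ using bilinearity of the commutator into the centre — we get $(g^n)^{\gamma(g^m)} = g^n \cdot \Delta(g,g)^{nm}$, and taking $m = (n-1)/2$ yields $nm = \binom{n}{2}$. (Here $(n-1)/2$ is read as an exponent modulo the order of $g$, which is odd-safe since $p$ is odd; alternatively one simply takes $\binom{n}{2}$ as the exponent and never mentions $(n-1)/2$, the first expression being cosmetic.) This proves the displayed formula; the "in particular" clause follows since $\Delta(g,g)^{\binom{n}{2}} = 1$ once $g^n = 1$ and $n$ is a power of $p$, because then $\binom{n}{2} = n(n-1)/2$ is divisible by $n$ (as $n$ is odd), so $\nu(g)^n = \nu(1) = 1$, and conversely $\nu(g)^n = 1$ forces $g^n = 1$ by looking at the image under $1^{(-)}$.

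The only mild subtlety — not really an obstacle — is keeping track of which conjugation-exponents survive: one must use \emph{both} hypotheses, namely $\gamma(G) \le \Aut_c(G)$ to push commutators into $Z(G)$ and $[Z(G),\gamma(G)] = 1$ to make those central values $\gamma$-invariant, so that the telescoping product collapses cleanly. I would present the computation using additive notation for $Z(G)$ to mirror the statement's second form, and cite Lemma~\ref{lemma:eqcond} for the equivalences being used.
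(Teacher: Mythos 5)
Your computation of the displayed formula is correct and is essentially the paper's own proof: both arguments iterate the action of $\nu(g)$ on $1$ to get the product $g^{\gamma(g)^{n-1}}\cdots g^{\gamma(g)}g$, use $\gamma(G)\le\Aut_{c}(G)$ to push the discrepancies $[g,\gamma(g)^{k}]$ into $Z(G)$, use $[Z(G),\gamma(G)]=1$ to linearise them as $\Delta(g,g)^{k}$, and collect the factor $\Delta(g,g)^{\binom{n}{2}}$; the identification with $(g^{n})^{\gamma(g^{(n-1)/2})}$ is the same in both.

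The one genuine gap is in the converse half of the final clause. You assert that $\nu(g)^{n}=1$ forces $g^{n}=1$ ``by looking at the image under $1^{(-)}$'', but that only yields $g^{n}\cdot\Delta(g,g)^{\binom{n}{2}}=1$, i.e.\ $g^{n}=\Delta(g,g)^{-\binom{n}{2}}$, which is a priori a nontrivial central element. The missing step (which the paper supplies) is: since this shows $g^{n}\in Z(G)$, and $[Z(G),\gamma(G)]=1$, bilinearity of $\Delta$ gives $\Delta(g,g)^{\binom{n}{2}}=\Delta(g^{n},g)^{(n-1)/2}=1$, whence $g^{n}=1$. The same identity $\Delta(g,g)^{n}=\Delta(g^{n},g)$ is also what underlies your forward direction --- you invoke divisibility of $\binom{n}{2}$ by $n$ without saying why $\Delta(g,g)^{n}=1$ --- so it is worth making explicit in both places. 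Note finally that the converse really is needed for ``retain their orders'': the forward direction alone only shows that the order of $\nu(g)$ divides the order of $g$.
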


\begin{proof}
  This follows from
  \begin{align*}
    1^{\nu(g)^{n}}
    &=
    g^{\gamma(g)^{n-1}} g^{\gamma(g)^{n-2}} \cdots g^{\gamma(g)} g
    \\&=
    g^{n} [g, \gamma(g)^{n-1}] [g, \gamma(g)^{n-2}] \cdots [g, \gamma(g)]
    \\&=
    g^{n} [g, \gamma(g)]^{\binom{n}{2}}
    =
    g^{n} \cdot \Delta(g, g)^{\binom{n}{2}}    
    \\&=
    g^{n} [g^{n}, \gamma(g)^{(n-1)/2}]
    \\&=
    (g^{n})^{\gamma(g)^{(n-1)/2}},
  \end{align*}
  where we have used the fact that $[G,  \gamma(G)]  \le Z(G)$,
  according to Lemma~\ref{lemma:eqcond}. 

  As to  the last  statement, for $n$ odd $g^{n}  = 1$  implies $\Delta(g,
  g)^{\binom{n}{2}}  =  \Delta(g^{n},  g)^{(n-1)/2}   =  1$,  so  that
  $\nu(g)^{n} = 1$. Conversely, if $\nu(g)^{n}  = 1$, we have $g^{n} =
  \Delta(g,  g)^{-   \binom{n}{2}}  \in  Z(G)$,  so   that  $\Delta(g,
  g)^{\binom{n}{2}} = \Delta(g^{n}, g)^{(n-1)/2} = 1$, and thus $g^{n}
  = 1$.
\end{proof}

\section{Power Isomorphisms}
\label{sec:powers}

Let $G$ be a finite $p$-group of class two, for $p > 2$.

As noted in Remark~\ref{rem:commutator-map}, the maps
\begin{align*}
  \Delta_{c} :\ &G/Z(G) \times G/G' \to G'\\
                     &(g Z(G), h G') \mapsto [g, h]^{c},
\end{align*}
for integers $c$, satisfy the conditions~\eqref{item:Delta} of
Proposition~\ref{prop:Delta}. To $\Delta_{c}$ we then associate
the $\gamma_{c}$ given by
\begin{equation*}
  g^{\gamma_{c}(h)} 
  = 
  g \Delta_{c}(g, h)
  =
  g [g, h]^{c}
  =
  g^{h^{c}},
\end{equation*}
that is, 
\begin{equation}\label{eq:gamma-from-Delta}
  \gamma_{c}(h) = \iota(h^{c}).  
\end{equation}

All these $\gamma_{c}$ yields regular subgroups $N \norm \Hol(G)$.
To see which $\gamma_{c}$ yield subgroups $N \in \Hc(G)$,
consider, for integers $d$  coprime to $p$, the bijection 
$\theta_{d} \in S(G)$  given by the $d$-th power map,  $\theta_{d} : x
\mapsto x^{d}$  on $G$, and write  $d'$ for the inverse  of $d$ modulo
$\exp(G)$.

For $g, h \in G$ we have
\begin{equation*}
  h^{\rho(g)^{\theta_{d}}}
  =
  (h^{d'})^{\rho(g) \theta_{d}}
  =
  (h^{d'} g)^{d}
  =
  h g^{d} [g, h^{d'}]^{\binom{d}{2}}
  =
  h^{\iota(g^{(1 - d)/2}) \rho(g^{d})},
\end{equation*}
so that
\begin{equation}\label{eq:rho-to-theta_d}
  \rho(g)^{\theta_{d}} 
  = 
  \iota(g^{(1 - d)/2}) \rho(g^{d}) \in \Aut(G) \rho(G) 
  =
  \Hol(G).  
\end{equation}
Since we have also
\begin{equation*}
  [\Aut(G), \theta_{d}] = 1,
\end{equation*}
it follows that $\theta_{d} \in \NHol(G) = N_{S(G)}(\Hol(G))$, and thus
$\theta_{d}$ induces an element of $T(G)$. To determine the
$\gamma$ associated to $\theta_{d}$, one can
apply~\eqref{eq:rho-theta-nu} to~\eqref{eq:rho-to-theta_d} to get
$\gamma(g^{\theta_{d}}) = \gamma(g^{d}) = \iota(g^{(1-d)/2})$, and
then, replacing $g$ by $g^{d'}$,
\begin{equation*}
  \gamma(g)
  =
  \iota(g^{(d'- 1)/2}).
\end{equation*}
(We could have also used
Lemma~\ref{lemma:from-theta-to-gamma} to obtain the same result.)

Clearly for a given $c$ the equation  $c = (d' - 1)/2$ has a
solution $d$  coprime to $\exp(G)$ if  and only if $c  \ne -1/2$
modulo $\exp(G)$,  so that for these  $c$ the $\gamma_{c}$
of~\eqref{eq:gamma-from-Delta} correspond to elements $N \in
\Hc(G)$. (The regular subgroup corresponding to $\gamma_{-1/2}$ is
abelian, and thus not isomorphic to $G$, see Remark~\ref{rem:BCH}.)

\eqref{eq:rho-to-theta_d} shows that the $\theta_{d}$ are all distinct
as $d$ ranges in the 
integers coprime to
$p$ between $1$ and $\exp(G/Z(G)) - 1$. Clearly $\theta_{d_{1}}
\theta_{d_{2}} = \theta_{d_{1} d_{2}}$, 
so that the $\theta_{d}$ yield a subgroup of $T(G)$ isomorphic to the
units of the integers modulo $\exp(G/Z(G))$.
We have obtained
\begin{proposition}\label{prop:powers}
  Let $G$ be a finite $p$-group of class two, for $p > 2$.

  Let $p^{r} = \exp(G/Z(G))$.

  Then $T(G)$ contains a cyclic subgroup of order $\phi(p^{r}) = (p-1)
  p^{r-1}$. In particular, $T(G)$ contains an element of order $p-1$.
\end{proposition}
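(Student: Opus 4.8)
The plan is to follow the path already laid out in the section: for each integer $d$ coprime to $p$, build an explicit element $\theta_d$ of $S(G)$, namely the $d$-th power map $x \mapsto x^d$, and verify that it normalizes $\Hol(G)$, hence represents an element of $T(G)$. The key computation is~\eqref{eq:rho-to-theta_d}, which uses the class-two commutator identity $(gh)^n = g^n h^n [h,g]^{\binom{n}{2}}$ to show $\rho(g)^{\theta_d} = \iota(g^{(1-d)/2}) \rho(g^d) \in \Hol(G)$; combined with the obvious fact $[\Aut(G), \theta_d] = 1$ (power maps commute with automorphisms), this gives $\theta_d \in \NHol(G)$. Here one must be a little careful that $(1-d)/2$ makes sense: since $p > 2$, $2$ is invertible modulo $\exp(G)$, so the exponent is a well-defined residue, and $g^{(1-d)/2}$ is unambiguous. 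One should also check, via~\eqref{eq:rho-theta-nu} applied to~\eqref{eq:rho-to-theta_d}, that $\theta_d$ actually moves $\rho(G)$ to a genuinely new regular subgroup rather than fixing it — equivalently, that $\theta_d \notin \Hol(G)$ unless $d \equiv 1$.

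Next I would show the map $d \mapsto \theta_d$ is a group homomorphism with the expected image. Since $(x^{d_1})^{d_2} = x^{d_1 d_2}$, we have $\theta_{d_1}\theta_{d_2} = \theta_{d_1 d_2}$, so the $\theta_d$ form a subgroup of $S(G)$, and modulo $\Hol(G)$ they give a subgroup of $T(G)$. The homomorphism $\Z \to T(G)$, $d \mapsto \theta_d \bmod \Hol(G)$, factors through the units modulo $\exp(G/Z(G))$: this is where~\eqref{eq:rho-to-theta_d} is used again, because $\theta_d$ and $\theta_{d'}$ induce the same element of $T(G)$ exactly when they conjugate $\rho(G)$ to the same subgroup, and reading off the formula $\rho(g)^{\theta_d} = \iota(g^{(1-d)/2})\rho(g^d)$ shows this subgroup depends only on $d$ modulo $\exp(G/Z(G))$, and that distinct such residues give distinct subgroups. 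Hence the $\theta_d$ yield a subgroup of $T(G)$ isomorphic to $(\Z/\exp(G/Z(G))\Z)^\times$, which is cyclic of order $\phi(p^r) = (p-1)p^{r-1}$ where $p^r = \exp(G/Z(G))$.

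Finally, a cyclic group of order $(p-1)p^{r-1}$ contains a unique subgroup of order $p-1$, generated by any element of that order (e.g.\ the image of a primitive root modulo $p$, suitably lifted); this gives the "in particular" clause. I expect no serious obstacle here — the argument is essentially a verification. The one point requiring genuine care is the injectivity claim: showing that the $\theta_d$ are pairwise distinct as elements of $T(G)$ for $d$ in a transversal of the units, which amounts to checking that $\rho(g)^{\theta_{d_1}} = \rho(g)^{\theta_{d_2}}$ for all $g$ forces $d_1 \equiv d_2$ modulo $\exp(G/Z(G))$; this follows by comparing the $\rho$-components $g^{d_1}$ and $g^{d_2}$ and the $\iota$-components, using that $\rho(G) \cap \Aut(G) = 1$ in $\Hol(G)$ and that $\exp(G/Z(G))$ is exactly the order of $\iota(G)$-relevant data. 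Since $\exp(G/Z(G)) = p^r$ and $\exp(G)$ may be larger, one takes $d$ between $1$ and $p^r - 1$ coprime to $p$, and the formula~\eqref{eq:rho-to-theta_d} shows these are all distinct in $\Hol(G)$-cosets.
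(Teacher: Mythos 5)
Your proposal is correct and follows essentially the same route as the paper, whose proof of Proposition~\ref{prop:powers} is exactly the discussion preceding it in Section~\ref{sec:powers}: the power maps $\theta_{d}$, the computation~\eqref{eq:rho-to-theta_d}, the homomorphism property $\theta_{d_{1}}\theta_{d_{2}}=\theta_{d_{1}d_{2}}$, and the identification of the resulting subgroup of $T(G)$ with the (cyclic, since $p$ is odd) unit group modulo $\exp(G/Z(G))$. Your extra care over the injectivity modulo $\Hol(G)$ is a reasonable filling-in of a step the paper states tersely, and it is handled correctly.
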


\section{Computing $T(\Gp)$}
\label{sec:Tgp}

Let $p > 2$ be a prime, and define
\begin{equation*}
  \Gp
  =
  \Span{
    x, y
    :
    x^{p^{2}}, y^{p^{2}},
    [x, y] = x^{p}
  }.
\end{equation*}
This is a group of order $p^{4}$ and nilpotence class two, such that
$\Gp' = \Span{x^{p}}$ has order 
$p$, and $\Gp^{p} = \Frat(\Gp) = Z(\Gp) = \Span{ x^{p}, y^{p} }$ has
order $p^{2}$. 

It is a well-known fact and  an easy exercise that $\Aut(\Gp)$
induces  on the  $\F_{p}$-vector space  $V  = \Gp/\Frat(\Gp)$  the group  of
matrices
\begin{equation*}
  \Set{
  \begin{bmatrix}
    a & 0\\
    b & 1
  \end{bmatrix}
  :
  a \in \F_{p}^{*}, b \in \F_{p}
  }
\end{equation*}
with respect to the basis induced by $x, y$. (Our automorphisms
operate on the right, therefore our vectors are row
vectors.) The same group is induced
on the vector  space $W = \Gp^{p} = Z(\Gp) = \Frat(\Gp)$, with respect
to the basis given 
by $x^{p}, y^{p}$.

We first note that
Lemma~\ref{lemma:formulas}\eqref{item:self-adjoint-on-autos} implies 
\begin{lemma}
  Let $G$ be a group, and $N \norm \Hol(G)$ a regular  subgroup.

  Let $\alpha \in Z(\Aut(G))$.

  Then
  \begin{equation*}
    [G, \alpha] = \Set{ g^{-1} g^{\alpha} : g \in G } \le \ker(\gamma).
  \end{equation*}
\end{lemma}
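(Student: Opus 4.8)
The plan is to read this off Lemma~\ref{lemma:formulas}\eqref{item:self-adjoint-on-autos} by specialising $\beta$ to $\alpha$. That formula reads $\gamma([\beta, g^{-1}]) = [\gamma(g), \beta]$. Since $\gamma(g) \in \Aut(G)$ for every $g \in G$, and $\alpha$ lies in $Z(\Aut(G))$, the right-hand side $[\gamma(g), \alpha]$ is trivial. Hence $\gamma([\alpha, g^{-1}]) = 1$, i.e. $[\alpha, g^{-1}] \in \ker(\gamma)$ for every $g \in G$.

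Next I would identify the commutator $[\alpha, g^{-1}]$, computed in $\Aut(G)G$, with a concrete element of $G$. Using the conventions in force (conjugation $a^{b} = b^{-1} a b$, and $\Aut(G)$ acting on $G$ on the right), a one-line computation in the semidirect product gives $[\alpha, g^{-1}] = \alpha^{-1} g \alpha g^{-1} = g^{\alpha} g^{-1} \in G$. So the first step says precisely that $g^{\alpha} g^{-1} \in \ker(\gamma)$ for all $g \in G$.

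Finally I would pass from $\{\, g^{\alpha} g^{-1} : g \in G \,\}$ to $[G,\alpha] = \{\, g^{-1} g^{\alpha} : g \in G \,\}$. The cleanest route: $\ker(\gamma)$ is a subgroup of $G$ — it is the kernel of the anti-homomorphism $\gamma$, hence in particular closed under inversion (from $\gamma(x x^{-1}) = \gamma(x^{-1})\gamma(x)$ and $\gamma(1) = 1$) — so replacing $g$ by $g^{-1}$ in ``$g^{\alpha} g^{-1} \in \ker(\gamma)$'' and then inverting yields $g^{-1} g^{\alpha} \in \ker(\gamma)$ for all $g$. Alternatively one may use the elementary fact that a central automorphism commutes with every inner automorphism, which forces $g$ and $g^{\alpha}$ to commute and hence $g^{-1} g^{\alpha} = g^{\alpha} g^{-1}$ directly. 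Either way, $[G, \alpha] \le \ker(\gamma)$.

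I do not expect a real obstacle here; the statement is essentially a direct corollary. The only point demanding a little care is the bookkeeping of conventions (the right action of $\Aut(G)$ on $G$ inside the holomorph, together with the commutator convention $[a,b] = a^{-1} b^{-1} a b$) when rewriting $[\alpha, g^{-1}]$ as the element $g^{\alpha} g^{-1}$ of $G$ — a sign or a side slip there is the main thing to guard against.
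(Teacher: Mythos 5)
Your proof is correct and follows exactly the paper's route: the paper derives this lemma directly from Lemma~\ref{lemma:formulas}\eqref{item:self-adjoint-on-autos} by taking $\beta = \alpha$ central, so that $[\gamma(g),\alpha]=1$ and hence $\gamma([\alpha,g^{-1}])=1$. Your additional bookkeeping identifying $[\alpha,g^{-1}]$ with $g^{\alpha}g^{-1}$ and passing to $g^{-1}g^{\alpha}$ (via closure of $\ker\gamma$ under inversion, or centrality of $g^{-1}g^{\alpha}$) is accurate and merely makes explicit what the paper leaves to the reader.
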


In the group $\Gp$, the power map $\alpha : g \mapsto g^{1+p}$ is an
automorphism lying in the centre of $\Aut(\Gp)$. Hence
\begin{lemma}\label{lemma:kernel-contains}
  Let $N \in \Jc(\Gp)$.
  Then
  \begin{equation*}
    \Gp' \le Z(\Gp) = \Frat(\Gp) = \Gp^{p} \le \ker(\gamma).
  \end{equation*}
\end{lemma}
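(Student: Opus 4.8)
The plan is to derive this as an immediate consequence of the Lemma just stated (the one about $\alpha\in Z(\Aut(G))$ forcing $[G,\alpha]\le\ker(\gamma)$), applied with $G=\Gp$ and $\alpha$ the power map $\alpha\colon g\mapsto g^{1+p}$. Since the text has already observed that this $\alpha$ lies in $Z(\Aut(\Gp))$, the Lemma hands us $[\Gp,\alpha]\le\ker(\gamma)$ with no further work, so the only thing left to do is to identify the set $[\Gp,\alpha]=\Set{g^{-1}g^{1+p}:g\in\Gp}$ with $\Gp^{p}=Z(\Gp)=\Frat(\Gp)$, and then to note the trivial inclusion $\Gp'\le\Gp^{p}$.

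For the verification that $\alpha$ is a central automorphism (routine, but worth recording): because $\Gp$ has class two, the identity $(gh)^{n}=g^{n}h^{n}[h,g]^{\binom{n}{2}}$ with $n=1+p$ gives $(gh)^{1+p}=g^{1+p}h^{1+p}[h,g]^{\binom{1+p}{2}}$, and since $p$ is odd $\binom{1+p}{2}=p\cdot\tfrac{1+p}{2}$ is divisible by $p$, while $[h,g]\in\Gp'=\Span{x^{p}}$ has order dividing $p$; hence $[h,g]^{\binom{1+p}{2}}=1$ and $\alpha$ is a homomorphism. It is bijective because $\exp(\Gp)=p^{2}$ is coprime to $1+p$, and it commutes with every $\beta\in\Aut(\Gp)$ since $\beta(g^{1+p})=\beta(g)^{1+p}$; thus $\alpha\in Z(\Aut(\Gp))$.

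Next I would compute $[\Gp,\alpha]$. As $g^{p}\in\Gp^{p}=Z(\Gp)$, one has $g^{-1}g^{1+p}=g^{-1}\,g\,g^{p}=g^{p}$, so $[\Gp,\alpha]=\Set{g^{p}:g\in\Gp}$. By the same class-two computation (now $\binom{p}{2}=\tfrac{p(p-1)}{2}$ is divisible by $p$ and $\Gp'$ has exponent $p$) the $p$-th power map is a homomorphism on $\Gp$, so its image is a subgroup, namely $\Span{x^{p},y^{p}}=\Gp^{p}=\Frat(\Gp)=Z(\Gp)$. Therefore $[\Gp,\alpha]=\Gp^{p}$, and the preceding Lemma yields $\Gp^{p}\le\ker(\gamma)$; combining this with $\Gp'=\Span{x^{p}}\le\Span{x^{p},y^{p}}=\Gp^{p}$ gives the full chain $\Gp'\le Z(\Gp)=\Frat(\Gp)=\Gp^{p}\le\ker(\gamma)$. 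There is no real obstacle here; the only point requiring care is the bookkeeping with $\binom{1+p}{2}$ and $\binom{p}{2}$ being multiples of $p$ for $p$ odd, which is exactly what makes the relevant power maps homomorphisms and forces $[\Gp,\alpha]$ to be all of $\Gp^{p}$ rather than a proper subset.
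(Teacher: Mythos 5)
Your proposal is correct and follows exactly the paper's route: the paper proves this lemma by citing the immediately preceding unnumbered Lemma with $\alpha\colon g\mapsto g^{1+p}\in Z(\Aut(\Gp))$, so that $[\Gp,\alpha]=\Set{g^{p}:g\in\Gp}=\Gp^{p}\le\ker(\gamma)$, together with the already-established identifications $\Gp'\le Z(\Gp)=\Frat(\Gp)=\Gp^{p}$. You merely spell out the routine verifications (that $\alpha$ is a central automorphism and that the $p$-th power map is an endomorphism for $p$ odd in class two) which the paper leaves implicit.
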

It follows that $\Gp$ satisfies the equivalent conditions of
Lemma~\ref{lemma:eqcond}. In particular, $\gamma(\Gp) \le
\Aut_{c}(\Gp)$. Moreover, 
\begin{equation}\label{eq:gamma-centralizes}
  [Z(\Gp), \gamma(\Gp)] 
  = 
  [\Gp^{p}, \gamma(\Gp)] 
  = 
  [\Gp, \gamma(\Gp)]^{p} 
  = 
  1,
\end{equation}
as $[\Gp, \gamma(\Gp)]
\le Z(\Gp)$, and $Z(\Gp)$ has exponent $p$.

We may thus appeal to the $\Delta$ setting of
Proposition~\ref{prop:Delta}. Since $\gamma(\Frat(\Gp)) = 1$ by
Lemma~\ref{lemma:kernel-contains}, 
$\Delta$ is well-defined as a map 
$V \times V \to W$.

For $\Delta$ as in Proposition~\ref{prop:Delta}, and $\beta \in
\Aut(G)$ which induces   
\begin{equation}\label{eq:beta}
  \begin{bmatrix}
    a & 0\\
    b & 1
  \end{bmatrix},
\end{equation}
on $V$, we have
\begin{align*}
  \Delta(x, x)^{\beta}
  =
  \Delta(x^{\beta}, x^{\beta})
  =
  \Delta(a x, a x)
  =
  a^{2} \Delta(x, x).
\end{align*}
If $p >  3$, and we choose  $a \ne 1,  -1$, we obtain  that if
$\Delta(x, x) \ne 0$, then $\Delta(x, x) \in W$ is an eigenvector for
$\beta$ with  respect to the eigenvalue  $a^{2} \ne a, 1$.  It follows
that $\Delta(x, x) = 0$.

If $p = 3$, we first choose $a = -1$ and $b = 0$ to obtain that
$\Delta(x, x) \in \Span{y^{p}}$. We then choose $a = -1$ and $b = 1$
to obtain that $\Delta(x, x) \in \Span{ x^{p} y^{2 p}}$, so that
$\Delta(x, x) = 0$ in this case too. (We could have used this
argument also for $p > 3$.)

We have also
\begin{align*}
  \Delta(x, y)^{\beta}
  =
  \Delta(x^{\beta}, y^{\beta})
  =
  \Delta(a x, b x + y)
  =
  a \Delta(x, y).
\end{align*}
Taking $b = 0$ and $a \ne 1$ in $\beta$, we see that $\Delta(x, y) = t
x^{p}$ for some $t$.

Similarly, $\Delta(y, x) = s x^{p}$ for some $s$.
We have also
\begin{align*}
  \Delta(y, y)^{\beta}
  =
  \Delta(y^{\beta}, y^{\beta})
  =
  \Delta(b x + y, b x + y)
  =
  b (s + t) x^{p} + \Delta(y, y).
\end{align*}
If $\Delta(y, y) = u y^{p} + v x^{p}$, we have
\begin{align*}
  u y^{p} + (u b  + v a) x^{p}
  =
  \Delta(y, y)^{\beta}
  =
  (b (s + t) + v) x^{p} + u y^{p}.
\end{align*}
Setting $a = 1$ we get $u = s + t$, and then setting $a = -1$ we get
$v = 0$.

Therefore the $\gamma$ for $\Gp$ are given by
\begin{equation}\label{eq:gamma}
  \gamma_{s, t}(x)
  :
  \begin{cases}
    x \mapsto x\\
    y \mapsto x^{p s} y
  \end{cases}
  \qquad
  \gamma_{s, t}(y)
  :
  \begin{cases}
    x \mapsto x^{1 + p t}\\
    y \mapsto y^{1 + p (s + t)}
  \end{cases}
\end{equation}
for $s, t \in \F_{p}$.

Lemma~\ref{lemma:commutator-of-nu} yields that for the regular
subgroup $N \norm
\Hol(G)$ corresponding to $\gamma_{s, t}$ one has
\begin{align*}
  [\nu(x), \nu(y^{d})]
  &=
  \nu( [x, y^{d}] [x, \gamma_{s, t}(y^{d})] [y^{d}, \gamma_{s, t}(x)]^{-1} )
  \\&=
  \nu( x^{p d} x^{p d t} x^{-p d s} )
  \\&=
  \nu([x, y]^{d (1 + t - s)})
  \\&=
  \nu(x^{p d (1 + t - s)}).
\end{align*}
Therefore for $t - s + 1 = 0$ we have $\Gp \not\cong
(\Gp, \circ)$, as the 
latter is abelian, and the corresponding regular subgroup $N$ lies in
$\Jc(G) \setminus \Hc(G)$.

If $t - s + 1 \ne 0$, we choose 
$d = (1 + t - s)'$, so that 
$[\nu(x), \nu(y^{d})] = \nu(x^{p})$. Since by Lemma~\ref{lemma:powers}
$\nu(x^{p}) = \nu(x)^{p}$, and
$\Span{\nu(x)}, \Span{\nu(y^{d})}$ have each order $p^{2}$, and
intersect trivially, we have that $\Gp \cong (\Gp, \circ)$, via the
isomorphism defined by
\begin{equation}\label{eq:theta}
  \theta_{d, s} :
  \begin{cases}
    x \mapsto x\\
    y \mapsto y^{d}
  \end{cases}
\end{equation}
and by the accompanying $\gamma_{s, t}$ as above, for $t = d' + s - 1$.

\begin{remark}
  One might wonder where the power isomorphisms $\theta_{d}$ of
  Section~\ref{sec:powers} have gone in~\eqref{eq:theta}. They can be
  recovered using Lemma~\ref{lemma:theta-by-alpha}, with $\alpha =
  \beta$ defined on $V$ by~\eqref{eq:beta}, for $a = d$ and $b = 0$,
  and $\theta = \theta_{d, (1 - d')/2}$. 
\end{remark}

We now determine the structure of the group $T(\Gp)$. Note first
that we may take $d \in \F_{p}^{*}$ in~\eqref{eq:theta}.
To see this, apply Lemma~\ref{lemma:theta-by-alpha} with
$\alpha$ chosen to be the central automorphism defined by
\begin{equation*}
  \alpha :
  \begin{cases}
    x \mapsto x\\
    y \mapsto y^{1 + p d'u}
  \end{cases}
\end{equation*}
for some $u$. We obtain that
\begin{equation*}
  \alpha \theta_{d, s} :
  \begin{cases}
    x \mapsto x\\
    y \mapsto y^{d + p u}
  \end{cases}
\end{equation*}
yields the same element of $T(\Gp)$ with respect to the same $\gamma$.

We now want to show
\begin{theorem}
  \begin{equation*}
    \theta_{d, s} \theta_{e, u} \equiv \theta_{d e, s e' + u}.
  \end{equation*}
  Therefore  
  \begin{equation*}
    T(\Gp) = \Set{ \theta_{d, s} : d \in \F_{p}^{*}, s \in \F_{p} },
  \end{equation*}
  a group  of  order  $p  (p-1)$, is  isomorphic  to
  $\AGL(1, p)$, that is, to the holomorph of a group of order $p$.
\end{theorem}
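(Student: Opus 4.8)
The plan is to verify the composition law $\theta_{d,s}\theta_{e,u} \equiv \theta_{de, se'+u}$ directly, and then read off the group structure. First I would compute the composition of the two bijections in $S(\Gp)$: since both $\theta_{d,s}$ and $\theta_{e,u}$ fix $x$ and send $y \mapsto y^{d}$, resp.\ $y \mapsto y^{e}$, the set-theoretic composite $\theta_{d,s}\theta_{e,u}$ fixes $x$ and sends $y \mapsto (y^{d})^{e} = y^{de}$ (using that the relevant power maps commute and that $d, e$ may be taken in $\F_{p}^{*}$, extending $y \mapsto y^{d}$ to $\Gp$ via the remark preceding the theorem). So on the level of the underlying bijection, $\theta_{d,s}\theta_{e,u}$ agrees with $\theta_{de, s'}$ for \emph{some} second index $s'$; the content is to identify $s'$.

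To pin down $s'$, I would use Lemma~\ref{lemma:from-theta-to-gamma}: the $\gamma$ associated to a product $\theta\theta'$ of isomorphisms to the respective $\circ$-structures is determined by the formula there, and since $T(\Gp)$ acts \emph{regularly} on $\Hc(\Gp)$, the element of $T(\Gp)$ represented by $\theta_{d,s}\theta_{e,u}$ is completely determined by its associated $\gamma$, which by~\eqref{eq:gamma} is $\gamma_{s',t'}$ with $t' = (de)' + s' - 1$. Thus it suffices to compute $\gamma$ for the composite and match it against the list~\eqref{eq:gamma}. Concretely, $\theta_{e,u}$ conjugates $\rho(\Gp)$ to the regular subgroup with parameter $(e,u)$, and $\theta_{d,s}$ further conjugates that; tracking the effect on $\rho(y)$ via~\eqref{eq:rho-theta-nu} — i.e.\ computing $\rho(y)^{\theta_{d,s}\theta_{e,u}}$ and extracting the $\Aut(\Gp)$-part — yields the value of $\gamma$ at $y^{de}$, from which $s'$ is read off after substituting $y \mapsto y^{(de)'}$. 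Alternatively, and perhaps more cleanly, I would compute $\gamma(y)$ for the composite directly from the displayed formula in Lemma~\ref{lemma:from-theta-to-gamma}, namely $y^{\gamma(y)} = ((y)^{(\theta_{d,s}\theta_{e,u})^{-1}}\cdot\text{(stuff)})^{\theta_{d,s}\theta_{e,u}}y^{-1}$, plugging in the explicit actions~\eqref{eq:theta} and~\eqref{eq:gamma} of the two factors and using the class-two commutator identities from Section~\ref{sec:tools} to collect the power of $x^{p}$ that appears. The bookkeeping should produce exactly $s' = se' + u$.

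Once the composition law is established, the group-theoretic conclusion is immediate. The set $\Set{\theta_{d,s} : d \in \F_{p}^{*}, s \in \F_{p}}$ is closed under the operation $(d,s)(e,u) = (de, se'+u)$, contains the identity $\theta_{1,0}$, and each element has inverse $\theta_{d',-sd}$ (check: $(d,s)(d',-sd) = (1, sd + (-sd)) = (1,0)$), so it is a subgroup of $T(\Gp)$ of order $p(p-1)$. Since by Proposition~\ref{prop:Delta} and the computation in this section every $\gamma$ for $\Gp$ arising from an element of $\Hc(\Gp)$ is one of the $\gamma_{s,t}$ with $t = d'+s-1$ for some $d \in \F_{p}^{*}$, and $T(\Gp)$ acts regularly on $\Hc(\Gp)$, this subgroup is all of $T(\Gp)$. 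Finally, the map $(d,s) \mapsto \begin{bmatrix} d & 0 \\ s & 1 \end{bmatrix}$ (or $x \mapsto dx + s$ as an affine transformation of $\F_{p}$) is a group isomorphism onto $\AGL(1,p)$: composing $x \mapsto dx+s$ then $x \mapsto ex + u$ gives $x \mapsto e(dx+s) + u = (de)x + (es+u)$, which matches after the harmless change of parametrization $s \leftrightarrow s'=$ the "new" coordinate used above; so $T(\Gp) \cong \AGL(1,p)$, the holomorph of $C_{p}$.

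The main obstacle is the middle step: correctly tracking the $\gamma$-component (equivalently, the extra power of $x^{p}$) through the composition of the two bijections. The subtlety is that $\theta_{d,s}$ is an isomorphism $\Gp \to (\Gp,\circ_{1})$ and $\theta_{e,u}$ an isomorphism $\Gp \to (\Gp,\circ_{2})$ for \emph{different} brace operations, so one must be careful about which group structure each map respects and in which order the conjugations compose in $S(\Gp)$ versus $\NHol(\Gp)$; a sign or an inverse misplaced here changes $se'+u$ into something else. Using Lemma~\ref{lemma:theta-by-alpha} to normalize (absorbing central automorphisms, e.g.\ to reduce $y \mapsto y^{d}$ to genuine power maps when convenient) and the class-two identity $(gh)^{n} = g^{n}h^{n}[h,g]^{\binom{n}{2}}$ to expand the powers should keep the computation under control, but it is the one place where care is genuinely required rather than routine.
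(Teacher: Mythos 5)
Your proposal is correct and follows essentially the same route as the paper: normalize the composite to some $\theta_{de,w}$ by absorbing a central automorphism (Lemma~\ref{lemma:theta-by-alpha}), pin down $w$ by computing the associated $\gamma$ via the product formula of Lemma~\ref{lemma:from-theta-to-gamma} --- the paper does this by evaluating $(yx)^{\theta_{d,s}\theta_{e,u}}$ in two ways --- and conclude by regularity of the $T(\Gp)$-action on $\Hc(\Gp)$. The only imprecision is your opening claim that the set-theoretic composite sends $y\mapsto y^{de}$: it actually sends $y$ to $y^{de}$ times a $p$-th power of $y$ (the paper computes $y^{de+p\binom{d}{2}e(u+v)}$), which is precisely why the normalization by a central automorphism that you invoke later is needed.
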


\begin{proof}
  Note first  that it follows from Lemma~\ref{lemma:powers}, and
  $x^{\theta_{d, s}}  = x$ 
  and $x^{\gamma_{s, t}(x)} = x$  that
  \begin{equation}\label{eq:theta-on-powers-of-g}
    (x^{i})^{\theta_{d, s}} = x^{i}
  \end{equation}
  for all $i$.
  
  Similarly, note first that combining
  Theorem~\ref{thm:normal-regular}\eqref{item:nu-is-iso} 
  and Lemma~\ref{lemma:conjugation}
  we obtain that for all $i$
  \begin{align*}
    \nu((y^{i})^{\theta_{d,s}})
    =
    \nu(y)^{i},
  \end{align*}
  so that Lemma~\ref{lemma:powers} yields
  \begin{equation}\label{eq:theta-on-powers-of-h}
    \begin{aligned}
      (y^{i})^{\theta_{d, s}}
      &=
      (y^{d i})^{\gamma(y^{d (i-1)/2})}
      \\&=
      y^{(d i) (1 + p d (i-1)/2 (s + t)}
      \\&=
      y^{d i + p d \binom{i}{2} (s + t)}.
    \end{aligned}
  \end{equation}
  
  Now consider another $\theta_{e, u}$, and let $v = e' + u - 1$. 
  From~\eqref{eq:theta-on-powers-of-h} we obtain
  \begin{align*}
    y^{\theta_{d, s} \theta_{e, u}}
    =
    (y^{d})^{\theta_{e, u}}
    =
    y^{d e + p \binom{d}{2} e (u + v)}.
  \end{align*}
  Using Lemma~\ref{lemma:theta-by-alpha}  again, 
  we obtain
  $\alpha \theta_{d, s}  \theta_{e, u} = \theta_{d e, w}$  for some $w$,
  so that we can take $f = d e$. Set $z = f' + w - 1$.
  
  To determine $w$, we compute $(h g)^{\theta_{d, s} \theta_{e, u}}$ in
  two ways. We compute first
  \begin{equation}\label{eq:one}
    \begin{aligned}
      (y x)^{\theta_{d, s} \theta_{e, u}}
      &=
      (y^{\theta_{d, s} \theta_{e, u}})^{\gamma_{w, z}(x)} x^{\theta_{d, s}
        \theta_{e, u}}
      \\&=
      ((hy^{d})^{\theta_{e, u}})^{\gamma_{w, z}(x)} x
      \\&=
      (y^{d e})^{\gamma_{w, z}(x)} x y^{p \binom{d}{2} e (u + v)},
    \end{aligned}
  \end{equation}
  where we have used~\eqref{eq:theta-on-powers-of-g},
  \eqref{eq:theta-on-powers-of-h}, and~\eqref{eq:gamma-centralizes}.
  
  We then compute
  \begin{equation}\label{eq:two}
    \begin{aligned}
      (y x)^{\theta_{d, s} \theta_{e, u}}
      &= 
      ((y^{\theta_{d, s}})^{\gamma_{s, t}(x)} x)^{\theta_{e, u}}
      \\&=
      (y^{d} x^{p s d} x)^{\theta_{e, u}}
      \\&=
      ((y^{d})^{\theta_{e, u}})^{\gamma_{u, v}(x)} (x^{1 + p s
        d})^{\theta_{e, u}}
      \\&=
      (y^{d e + p \binom{d}{2} e (u + v)})^{\gamma_{u, v}(x)} (x^{1 + p s
        d})^{\theta_{e, u}}
      \\&=
      y^{d e + p \binom{d}{2} e (u + v)} x^{1 + p s d + p d e u},
    \end{aligned}
  \end{equation}
  where we have used~\eqref{eq:gamma-centralizes}.
  
  From~\eqref{eq:one}~and \eqref{eq:two} we obtain
  \begin{align*}
    (y^{d e})^{\gamma_{w, z}(x)}
    &=
    y^{d e} x^{p (d s + d e u)}.
  \end{align*}
  Taking ($d' e'$)-th powers we get
  \begin{equation*}
    y^{\gamma_{w, z}(x)}
    =
    y x^{p (s e' + u)}
  \end{equation*}
  so that
  \begin{equation*}
    \theta_{d, s} \theta_{e, u} \equiv \theta_{d e, s e' + u}
  \end{equation*}
  as claimed.
\end{proof}

\section{More Examples}
\label{sec:examples}

In this section we first show that  for $p > 2$ the non-abelian groups
$G$ of order $p^{3}$, and the groups $G$ of class $2$ and exponent $p$
have    $T(G)$   of    order    $p-1$,   that    is,   according    to
Proposition~\ref{prop:powers}, as small as possible.

We then exhibit a family of finite $p$-groups $G$ of class $2$, for $p
> 2$  with  minimum  number  of  generators $n \ge 4$  for  which  $T(G)$  is
non-abelian,  and contains  an  elementary abelian  subgroup of  order
$p^{\binom{n}{2} \binom{n+1}{2}}$.

\subsection{A group of order $p^{3}$}
\label{subsec:p3}

Let $p > 2$ be a prime, and $\Hp$ the group of order $p^{3}$ and
exponent $p^{2}$, so that
\begin{equation*}
  \Hp
  =
  \Span{
    x, y
    :
    x^{p^{2}}, y^{p},
    [x, y] = x^{p}
  }.
\end{equation*}
We claim
\begin{proposition}\label{thm:p3}
  $T(\Hp)$ is a cyclic group  of  order  $p-1$.
\end{proposition}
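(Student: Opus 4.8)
The plan is to carry out, in the simpler setting $\Size{\Hp'} = p$, the same kind of computation as for $\Gp$ in Section~\ref{sec:Tgp}, and then to read off the answer by combining the resulting count of $\Jc(\Hp)$ with Proposition~\ref{prop:powers}.

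First I would record the structure of $\Hp$: it has nilpotence class two, and $\Hp' = Z(\Hp) = \Frat(\Hp) = \Hp^{p} = \Span{x^{p}}$ has order $p$, so that $V = \Hp/\Frat(\Hp)$ is two-dimensional over $\F_{p}$ with the basis induced by $x, y$, while $W = Z(\Hp) = \Span{x^{p}}$ is one-dimensional. Exactly as in Lemma~\ref{lemma:kernel-contains}, the power map $\alpha : g \mapsto g^{1+p}$ is an automorphism of $\Hp$ --- it is an endomorphism because $p \mid \binom{1+p}{2}$, so that the correction term in the class-two power identity vanishes, and it is bijective because $\gcd(1+p, p^{2}) = 1$ while $y^{p} = 1$ --- and it lies in $Z(\Aut(\Hp))$ with $[\Hp, \alpha] = \Hp^{p} = \Span{x^{p}}$. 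Hence $\Span{x^{p}} \le \ker(\gamma)$ for every $N \in \Jc(\Hp)$, the equivalent conditions of Lemma~\ref{lemma:eqcond} hold, and $[Z(\Hp), \gamma(\Hp)] = [\Span{x^{p}}, \gamma(\Hp)] = [\Hp, \gamma(\Hp)]^{p} \le Z(\Hp)^{p} = 1$. So Proposition~\ref{prop:Delta} applies, and $\Jc(\Hp)$ is in bijection with the $\Aut(\Hp)$-equivariant bilinear maps $\Delta : V \times V \to W$.

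Next I would classify such $\Delta$. It is a well-known and easy exercise that $\Aut(\Hp)$ induces on $V$ the group of matrices $\begin{bmatrix} a & b \\ 0 & 1 \end{bmatrix}$ with $a \in \F_{p}^{*}$, $b \in \F_{p}$, acting on $W$ by multiplication by $a$; in fact it suffices to use the explicit automorphisms $x \mapsto x^{a},\ y \mapsto y$ and $x \mapsto x y,\ y \mapsto y$. Feeding a diagonal automorphism with $a \ne 0, 1$ (possible since $p \ge 3$, the choice $a = -1$ working also for $p = 3$ just as in Section~\ref{sec:Tgp}) into~\eqref{eq:Delta-beta} forces $\Delta(\bar x, \bar x) = \Delta(\bar y, \bar y) = 0$, and the automorphism $x \mapsto x y,\ y \mapsto y$ then forces $\Delta(\bar y, \bar x) = -\Delta(\bar x, \bar y)$. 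Therefore the equivariant $\Delta$ are exactly the commutator maps $\Delta_{t} : (g, h) \mapsto [g, h]^{t}$, $t \in \F_{p}$, of Remark~\ref{rem:commutator-map}, and in particular $\Size{\Jc(\Hp)} = p$. (By the analysis of Section~\ref{sec:powers} the corresponding $\gamma_{t} = \iota(h^{t})$ yields $N_{t} \in \Hc(\Hp)$ precisely for $t \not\equiv -1/2 \pmod{p}$; equivalently, by Lemmas~\ref{lemma:commutator-of-nu} and~\ref{lemma:powers}, $(\Hp, \circ)$ is non-abelian iff $1 + 2t \ne 0$, and then still has an element of order $p^{2}$, hence is isomorphic to $\Hp$ and not to the Heisenberg group. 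This gives $\Size{\Hc(\Hp)} = p - 1$ directly, but it is not needed below.)

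Finally, since $\Hp/Z(\Hp) = \Hp/\Span{x^{p}} \cong (\Z/p\Z)^{2}$ has exponent $p$, Proposition~\ref{prop:powers} (with $r = 1$) gives a cyclic subgroup of $T(\Hp)$ of order $\phi(p) = p - 1$. On the other hand $T(\Hp)$ acts regularly on $\Hc(\Hp) \subseteq \Jc(\Hp)$, so $\Size{T(\Hp)} \le \Size{\Jc(\Hp)} = p$; since $(p - 1) \mid \Size{T(\Hp)} \le p$ and $p > 2$, we get $\Size{T(\Hp)} = p - 1$, whence $T(\Hp)$ coincides with that cyclic subgroup. The main obstacle --- and essentially the only real work --- is the classification of the equivariant $\Delta$ in the third paragraph: once the action of $\Aut(\Hp)$ on $V$ and $W$ is in hand this is a short linear-algebra computation, the only additional point requiring a little care being the verification that $g \mapsto g^{1+p}$ really is an automorphism of $\Hp$.
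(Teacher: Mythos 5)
Your argument is correct and follows essentially the same route as the paper's sketch: pass to the $\Delta$-setting of Proposition~\ref{prop:Delta} via the central power automorphism $g \mapsto g^{1+p}$, use equivariance under a couple of explicit automorphisms to show that the admissible $\Delta$ are exactly the $p$ multiples of the commutator map, and combine with Proposition~\ref{prop:powers}. Two minor remarks: your closing step $(p-1)\mid\Size{T(\Hp)}\le\Size{\Jc(\Hp)}=p$ neatly sidesteps the paper's explicit identification of the unique abelian member of $\Jc(\Hp)$ and of the conjugating isomorphisms $\theta_{d}$; and your \emph{upper}-triangular matrices (generated by $x\mapsto x^{a},\ y\mapsto y$ and $x\mapsto xy,\ y\mapsto y$) are in fact the correct description of the action induced on $V$ --- the lower-triangular form displayed in the paper is carried over from the $\Gp$ case, but $y\mapsto x^{b}y$ does not preserve the order of $y$ in $\Hp$ --- although either version imposes the same constraints on $\Delta$, so nothing downstream is affected.
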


The arguments are similar to those for $\Gp$ in
Section~\ref{sec:Tgp}, so we only sketch them briefly. 

$\Aut(\Hp)$ 
induces  on the  $\F_{p}$-vector space  $V  = \Hp/\Frat(\Hp)$  the group  of
matrices
\begin{equation*}
  \Set{
  \begin{bmatrix}
    a & 0\\
    b & 1
  \end{bmatrix}
  :
  a \in \F_{p}^{*}, b \in \F_{p}
  }
\end{equation*}
with respect to the basis induced by $x, y$.
Clearly such an automorphism sends, using additive notation, $x^{p}$
to $a x^{p}$.

Arguing as in  Section~\ref{sec:Tgp}, one sees that  the
$\gamma$ for $\Hp$ are given by
\begin{equation*}
  \gamma_{t}(x)
  :
  \begin{cases}
    x \mapsto x\\
    y \mapsto x^{- p t} y
  \end{cases}
  \qquad
  \gamma_{t}(y)
  :
  \begin{cases}
    x \mapsto x^{1 + p t}\\
    y \mapsto y
  \end{cases},
\end{equation*}
for $t \in \F_{p}^{*}$.

For the regular
subgroup $N \norm
\Hol(G)$ corresponding to $\gamma_{t}$ we have
\begin{align*}
  [\nu(x), \nu(y^{d})]
  =
  \nu(x^{p d (1 + 2 t)}).
\end{align*}
Therefore $N$  is abelian when  $t = -1/2$.  (See Remark~\ref{rem:BCH}
below.) 

When $t \ne -1/2$, we choose $d =  (1 + 2 t)' \in \F_{p}^{*}$, so that
$\Hp \cong (\Hp, \circ)$, via the isomorphism defined by
\begin{equation*}
  \theta_{d} :
  \begin{cases}
    x \mapsto x\\
    y \mapsto y^{d}
  \end{cases}
\end{equation*}
and by the accompanying $\gamma_{t}$ as above.
Since we have  $p-1$ choices for $\theta_{d}$, it follows
from Proposition~\ref{prop:powers}  that $T(\Hp)$  is cyclic  of order
$p-1$.

\subsection{Groups of exponent $p$}

We claim the following
\begin{theorem}\label{thm:free}
  Let $G$ be the free $p$-group of class two and exponent $p > 2$ on $n \ge
  2$ generators.
  
  Then $T(G)$ is cyclic of order $p-1$.
\end{theorem}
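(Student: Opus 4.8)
The plan is to mimic closely the analysis carried out for $\Gp$ in Section~\ref{sec:Tgp} and for $\Hp$ in Subsection~\ref{subsec:p3}, exploiting the fact that here $G$ has exponent $p$, so that $G' \le Z(G) = \Frat(G) = G^{p}$ collapses in a favourable way. First I would record that, since $G$ is free of class two and exponent $p$ on $x_{1}, \dots, x_{n}$, we have $Z(G) = G' = \Frat(G)$, and $\Aut(G)$ induces the full $\GL(n, p)$ on $V = G/G'$, acting on $W = G' \cong \bigwedge\nolimits^{2} V$ in the induced (exterior square) way. Because $\exp(G) = p$, the power-map argument of Lemma~\ref{lemma:kernel-contains} is not available verbatim, but it is even easier: any $N \norm \Hol(G)$ has $\gamma(G') = 1$ automatically since $\gamma(G) \le \Aut(G)$ and we can check the hypotheses of Lemma~\ref{lemma:eqcond} directly — indeed $[G, \gamma(G)] \le \gamma(G')$-related commutators land in $G'=Z(G)$, and $[Z(G), \gamma(G)] \le [G,\gamma(G)]$ has exponent dividing $\exp(Z(G)) = p$ but actually we need it trivial; this should follow because $Z(G)$ is in the Frattini subgroup and central automorphisms restricted there are trivial on a group of exponent $p$ where $Z(G)=G'$. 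In any case, the conditions \eqref{item:a-assumption} and \eqref{item:b-assumption} of Proposition~\ref{prop:Delta} hold, so $N$ corresponds to a bilinear map $\Delta : V \times V \to W$ satisfying the $\Aut(G)$-equivariance \eqref{eq:Delta-beta}.

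Next I would classify the equivariant bilinear maps $\Delta : V \times V \to W = \bigwedge\nolimits^{2} V$. This is a representation-theoretic computation: we want $\GL(n,p)$-equivariant maps $V \otimes V \to \bigwedge\nolimits^{2} V$. Decomposing $V \otimes V = \Sym^{2} V \oplus \bigwedge\nolimits^{2} V$ (valid since $p > 2$), and using that $\Sym^{2} V$ and $\bigwedge\nolimits^{2} V$ are non-isomorphic irreducible $\GL(n,p)$-modules (for $p$ not too small relative to $n$, which needs a brief check — for the free object the modules are the "natural" ones and Schur-type arguments apply), one concludes that the space of such $\Delta$ is one-dimensional, spanned by the commutator map $(g G', h G') \mapsto [g, h]$. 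Hence every admissible $\Delta$ is $\Delta_{c} : (gG', hG') \mapsto [g,h]^{c}$ for some $c \in \F_{p}$, exactly the family of Remark~\ref{rem:commutator-map} and Section~\ref{sec:powers}, so the corresponding $\gamma$ are the $\gamma_{c} = \iota(h^{c})$ of \eqref{eq:gamma-from-Delta}.

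Then I would apply the results of Section~\ref{sec:powers} directly. By Lemma~\ref{lemma:commutator-of-nu}, for the subgroup $N$ attached to $\gamma_{c}$ one computes $[\nu(g), \nu(h)] = \nu([g,h] \cdot [g,h]^{c} \cdot [g,h]^{c}) = \nu([g,h]^{1 + 2c})$, so $(G, \circ)$ is abelian precisely when $1 + 2c \equiv 0 \pmod p$, i.e. $c = -1/2$; this is the single value giving $N \in \Jc(G) \setminus \Hc(G)$ (see Remark~\ref{rem:BCH}), consistent with the general remark in Section~\ref{sec:powers}. For every other $c$, the discussion around \eqref{eq:rho-to-theta_d} shows $\gamma_{c}$ is realised by a power map $\theta_{d}$ with $c = (d' - 1)/2$, and since $\exp(G/Z(G)) = p$ here, these $\theta_{d}$ for $d \in \F_{p}^{*}$ give a cyclic subgroup of $T(G)$ of order $p - 1$. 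Combining the classification of $\Delta$ (hence of $\Jc(G)$) with the fact that exactly $p - 1$ of the $p$ admissible $c$ lie in $\Hc(G)$, and that $T(G)$ acts regularly on $\Hc(G)$, we get $\Size{T(G)} = p - 1$; that it is cyclic follows from Proposition~\ref{prop:powers}, which already exhibits a cyclic subgroup of order $p - 1$. I expect the main obstacle to be the representation-theoretic step: making rigorous that the only $\GL(n,p)$-equivariant bilinear map $V \times V \to \bigwedge\nolimits^{2} V$ is (a scalar multiple of) the commutator map, which requires knowing $\Sym^{2} V$ and $\bigwedge\nolimits^{2} V$ are irreducible and inequivalent over $\F_{p}$ — this can fail for small $p$ (e.g. $p \mid n$), so the proof will need either a genuinely characteristic-$p$ argument or a direct elementary computation with a basis of $V$, choosing diagonal and permutation automorphisms to pin down the coefficients of $\Delta$ exactly as is done concretely for $\Gp$.
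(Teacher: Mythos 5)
Your overall strategy is the paper's: reduce to the $\Delta$-setting of Proposition~\ref{prop:Delta}, show that every admissible $\Delta$ is a scalar multiple $\Delta_{c}$ of the commutator map, and conclude via Section~\ref{sec:powers} that $\Hc(G)$ consists of the $p-1$ subgroups with $c \ne -1/2$, so that $T(G)$ is exactly the cyclic group of order $p-1$ supplied by Proposition~\ref{prop:powers}. Your endgame --- $[\nu(g),\nu(h)] = \nu([g,h]^{1+2c})$, exclusion of $c=-1/2$ as in Remark~\ref{rem:BCH}, realisation of the other $c$ by the power maps $\theta_{d}$ with $d=(1+2c)'$, and regularity of the $T(G)$-action on $\Hc(G)$ --- is exactly the paper's. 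However, your entry into the $\Delta$-setting has a genuine gap: the assertion that ``$\gamma(G')=1$ automatically since $\gamma(G)\le\Aut(G)$'' is a non sequitur, and the remark about central automorphisms being trivial ``on a group of exponent $p$'' does not establish $[Z(G),\gamma(G)]=1$. Conditions \eqref{item:a-assumption} and \eqref{item:b-assumption} are not automatic for a class-two group and must be proved. The paper's argument is: $\gamma(G)$ is a $p$-group (an antihomomorphic image of $G$) normalised by $\Aut(G)$ because of the second condition in \eqref{eq:gamma-for-normal}; since $G$ is relatively free and $\Frat(G)=Z(G)$, one has $\Aut(G)/\Aut_{c}(G)\cong\GL(n,p)$, which has no nontrivial normal $p$-subgroup, whence $\gamma(G)\le\Aut_{c}(G)$; Lemma~\ref{lemma:eqcond} then gives $[G',\gamma(G)]=1$, and $G'=Z(G)$ here. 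Without some such argument the whole $\Delta$-machinery is unavailable.

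On the classification step your route genuinely differs from the paper's. You propose $V\otimes V=\Sym^{2}V\oplus\exteriorpower^{2}V$ together with Schur's lemma, which would require knowing that $\Hom_{\GL(n,p)}(\Sym^{2}V,\exteriorpower^{2}V)=0$ and $\End_{\GL(n,p)}(\exteriorpower^{2}V)=\F_{p}$ over $\F_{p}$ for all odd $p$; this is true but is modular representation theory that you (rightly) flag as unverified. The paper instead does precisely the elementary computation you offer as a fallback: for $p>3$, a diagonal automorphism fixing $g$ (respectively $g,h$) and scaling the remaining basis vectors by $a\ne\pm1$ has no nonzero fixed points in $\exteriorpower^{2}V$ (respectively, only the multiples of $g\wedge h$), forcing $\Delta(g,g)=0$ and $\Delta(g,h)=c_{g,h}[g,h]$, after which bilinearity applied to $\Delta(g,h+k)$ forces the constant to be uniform; the case $p=3$ is treated separately using automorphisms acting as $-1$ on complements of $\Span{g}$ and of $\Span{g,h}$. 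So: right architecture and right conclusion, but you need to supply the normal-$p$-subgroup argument for $\gamma(G)\le\Aut_{c}(G)$ and to replace the representation-theoretic appeal by an actual proof (or by the explicit computation).
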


Since $G$ is free in a variety, and $\Frat(G) = Z(G)$, we have that
the group $\Aut(G)/\Aut_{c}(G)$ is isomorphic to $ \GL(n, p)$.
The  conditions   in~\eqref{eq:gamma-for-normal}  imply  that
$\gamma(G)$ is a $p$-group, which is normal in $\Aut(G)$.
Therefore $\gamma(G) \le 
\Aut_{c}(G)$. Moreover Lemma~\ref{lemma:eqcond} implies 
\begin{equation*}
  [Z(G), \gamma(G)] = [G', \gamma(G)] = 1.
\end{equation*}
We    can     thus    use     the    the    $\Delta$     setting    of
Proposition~\ref{prop:Delta}. Here  $V = G/G'  = G/Z(G)$ and $W  = G'$
are  $\F_{p}$-vector spaces,  and $W$  is naturally  isomorphic to  the
exterior square $\exteriorpower^{2} V$ of $V$.

\subsubsection{The case $p > 3$}

Take first $0 \ne g  \in V$,   and    complete   it    to   a    basis   of $V$.

When $p > 3$, we can choose $a \in \F_{p}^{*} \setminus \Set{1, -1}$. Consider 
the  automorphism $\beta$ of $V$ (and thus of $G$),  that fixes  $g$,
and  multiplies all  other 
basis elements  by $a$. Then  $\beta$ fixes $\Delta(g, g)$,  but there
are no fixed points of $\beta$  in $W \cong \exteriorpower^{2} V$, as
$\beta$ multiplies all
natural basis elements of $\exteriorpower^{2} V$
by $a \ne 1$ or $a^{2} \ne 1$. Thus $\Delta(g, g) = 0$.

Take now two independent elements $g$  and $h$ of $V$, and complete
them to a basis of $V$. Consider  the automorphism $\beta$ of $V$
which fixes $g, h$, and multiplies  all other basis elements by $a \in
\F_{p}^{*} \setminus \Set{1, -1}$. Then the only fixed points of
$\beta$ in $W$ are the multiples 
of  $[g,  h]$, so  that  $\Delta(g,  h) =  c  [g,  h]$ for  some
$c$. 
Consider another basis elements  $k \ne g, h$, so
that $\Delta(g, k) = c' [g, k]$,  and $\Delta(g, h + k) = c [g,
  h] + c' [g,  k]$. Since $\Delta(g, h + k)$ must  also be a multiple
of $[g,  h +  k]$, we  see that  $c' =  c$ uniformly,  so that,
reverting to multiplicative notation,
\begin{equation*}
  g^{\gamma(h)}
  =
  g \Delta(g, h)
  =
  g [g, h]^{c}
  =
  g^{h^{c}}.
\end{equation*}
that is, $\gamma(h) = \iota(h^{c})$. For $c \ne -1/2$,
this is uniquely 
associated, as we have seen in Section~\ref{sec:powers}, to
$g^{\theta_{d}} = g^{d}$, where $d = (1 + 2 c)'$. 
\begin{remark}\label{rem:BCH}
  Similarly to what happened in Subsection~\ref{subsec:p3}, there is
  no such $\theta_{d}$ when $c = -1/2$.  
  In fact in this case Lemma~\ref{lemma:commutator-of-nu} shows that
  \begin{align*}
    [\nu(g), \nu(h)]
    &=
    \nu([g, h] + \Delta(g, h) - \Delta(h, g))
    \\&=
    \nu([g, h] - \frac{1}{2} [g, h] + \frac{1}{2} [h, g])
    \\&=
    \nu(0) =  1,
  \end{align*}
  that is, $N \cong (G, \circ)$ is abelian, and thus $N$ is not
  isomorphic to $G$. 

  Note that this is a particular case of the Baer
  correspondence~\cite{Baer-corr}, which is in turn an approximation of
  the Lazard correspondence and the  Baker-Campbell-Hausdorff
  formulas~\cite[Ch.~9~and 10]{khukhro}.
\end{remark}

\subsubsection{The case $p = 3$}

The  method  described in  the  following  works  for a  general  $p >
2$, 
although it is slightly more cumbersome than the previous one.

Let $0 \ne g \in V = G/G' = G/Z(G)$, and let $C$ be a complement to $\Span{g}$
in $G$. Consider the automorphism $\beta$ of $V$ which fixes
$g$, and acts as scalar multiplication by $-1$ on 
$C$. Then
$
  \Delta(g, g) \in \exteriorpower^{2} C.
$
Letting $C$ range  over all complements of $\Span{g}$,  we obtain that
$\Delta(g, g) = 0$.

Similarly, given two independent elements $g, h$ of $V$, let
$C$ be a complement to $\Span{g, h}$ in $V$. Define the automorphism
$\beta$ of 
$V$  which fixes  $g, h$, and acts as scalar multiplication by $-1$ on
$C$. Then  
\begin{equation*}
  \Delta(g, h) \in \Span{g \wedge h} + \exteriorpower^{2} C.
\end{equation*}
Letting $C$ range over all complements of
$\Span{g, h}$, we 
obtain that $\Delta(g, h) \in \Span{g \wedge h}$.

\subsection{A biggish $T(G)$}

For the next class of examples, we will use the $p$-groups
of~\cite{simple-con}. 
\begin{theorem}[\cite{simple-con}]\label{thm:all-central}
  Let $p > 2$ be a prime, and $n \ge 4$. 

  Consider the presentation
  \begin{equation}\label{eq:presentation}
    \begin{aligned}
      G
      =
      \left\langle x_{1}, \dots, x_{n}\right.
      :\
      &\text{$[[x_{i}, x_{j}], x_{k}] = 1$ for all $i, j, k$,}
      \\&\text{$x_{i}^{p} = \prod_{j < k} [x_{j}, x_{k}]^{a_{i,j,k}}$
        for all $i$,}
      \left. \right\rangle,
    \end{aligned}
  \end{equation}
  where $a_{i,j,k} \in \F_{p}$.

  There  is   a  choice  of  the   $a_{i,j,k}  \in  \F_{p}$
  such  that  the following hold.
  \begin{itemize}
  \item $G$ has nilpotence class two,
  \item $G$ has order $p^{n + \binom{n}{2}}$, 
  \item $G/\Frat(G)$ has order $p^{n}$,
  \item $G' = \Frat(G) = Z(G)$ has order $p^{\binom{n}{2}}$ and
    exponent $p$,
  \item $\Aut(G) =  \Aut_{c}(G)$, that
  is, all of the automorphism of $G$ are central.
  \end{itemize}
\end{theorem}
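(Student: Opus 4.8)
The plan is to separate the statement into a routine part --- the assertions on the order and structure of $G$, which in fact hold for \emph{every} choice of the $a_{i,j,k}$ --- and the essential part, the equality $\Aut(G) = \Aut_{c}(G)$, which is what the special choice of the $a_{i,j,k}$ buys. For the routine part I would work inside the relatively free group $H$ of class two on $x_{1}, \dots, x_{n}$ subject to $[x_{i}, x_{j}]^{p} = 1$, so that $H'$ is elementary abelian of rank $\binom{n}{2}$ with basis the $[x_{i}, x_{j}]$ ($i < j$) and $H/H' \cong \Z^{n}$. Setting $c_{i} = \prod_{j<k} [x_{j}, x_{k}]^{a_{i,j,k}} \in H'$, the elements $x_{i}^{p} c_{i}^{-1}$ pairwise commute and are centralised by every $x_{j}$ (here one uses that $H$ has class two and its commutators have exponent $p$), so they generate a \emph{central} subgroup $N \trianglelefteq H$ with $N \cap H' = 1$ and $N \cong \Z^{n}$. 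The group of~\eqref{eq:presentation} is $G = H/N$, and from $N \cap H' = 1$ one reads off that $G' \cong H'$ has order $p^{\binom{n}{2}}$ and exponent $p$, that $G/G' \cong \Z^{n}/p\Z^{n}$ has order $p^{n}$ --- whence $|G| = p^{n + \binom{n}{2}}$ --- that $\Frat(G) = G^{p} G' = G'$, and that $Z(G) = Z(H)N/N = G'$; in particular $G$ has class exactly two. So the first four bullets hold irrespective of the $a_{i,j,k}$.

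The next step is to translate the last bullet into linear algebra. Put $V = G/\Frat(G) \cong \F_{p}^{n}$ and identify $W = G' = Z(G)$ with $\exteriorpower^{2} V$ via the commutator map. Since $p$ is odd and $W$ has exponent $p$, the $p$-th power map on $G$ induces an $\F_{p}$-linear map $\pi \colon V \to \exteriorpower^{2} V$ given by $\pi(\bar x_{i}) = \sum_{j<k} a_{i,j,k}\, \bar x_{j} \wedge \bar x_{k}$, and choosing the $a_{i,j,k}$ amounts to choosing $\pi$. Every $\alpha \in \Aut(G)$ induces some $g \in \GL(V)$ and necessarily acts as $\exteriorpower^{2} g$ on $W$; comparing $\alpha(x_{i}^{p})$ with $\alpha(c_{i})$ gives $(\exteriorpower^{2} g) \circ \pi = \pi \circ g$, and conversely any such $g$ lifts (arbitrarily, on the generators) to an automorphism of $G$, because such a lift respects the relations of~\eqref{eq:presentation}. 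As $\Frat(G) = Z(G)$, an automorphism is central exactly when it induces the identity on $V$, so altogether
\begin{equation*}
  \Aut(G)/\Aut_{c}(G) \cong \Set{ g \in \GL(V) : (\exteriorpower^{2} g) \circ \pi = \pi \circ g },
\end{equation*}
the stabiliser of $\pi$ for the natural action of $\GL(V)$ on $\Hom(V, \exteriorpower^{2} V) \cong \exteriorpower^{2} V \otimes V^{*}$. Thus the theorem reduces to producing a $\pi \in \exteriorpower^{2} V \otimes V^{*}$ that is fixed by no $g \ne 1$ in $\GL(V)$.

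This final point is where the real work lies, and I expect it to be the main obstacle. For $g \ne 1$ the fixed subspace $(\exteriorpower^{2} V \otimes V^{*})^{g}$ is \emph{proper} --- one checks that $g$ acts non-trivially on $\exteriorpower^{2} V \otimes V^{*}$ as soon as $n \ge 3$ --- so it suffices that these subspaces fail to cover $\exteriorpower^{2} V \otimes V^{*}$, and for that the counting inequality
\begin{equation*}
  \sum_{1 \ne g \in \GL(V)} p^{\dim (\exteriorpower^{2} V \otimes V^{*})^{g}} < p^{n\binom{n}{2}}
\end{equation*}
is enough. I would establish it by stratifying $\GL(V)$ according to the residual rank $r = \dim \mathrm{im}(g - 1)$: the number of $g$ of a given residual rank grows like a fixed power of $p^{r}$, while $\dim (\exteriorpower^{2} V \otimes V^{*})^{g}$ falls short of the full dimension $n\binom{n}{2}$ by an amount increasing with $r$ and with $n$, the transvections ($r = 1$) being the tightest case; the inequality then comes out in one's favour precisely when $n \ge 4$, which is the hypothesis. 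The laborious ingredients here are the exact count of elements of each residual rank and, chiefly, a sharp lower bound for the corank of $g - 1$ acting on $\exteriorpower^{2} V \otimes V^{*}$ in terms of $r$ and $n$. (Alternatively one can bypass the counting entirely by writing down explicit $a_{i,j,k}$ that encode a sufficiently rigid pattern --- e.g.\ chosen so that stabilising $\pi$ forces $g$ first to be triangular in a suitable basis, then diagonal, then scalar, then trivial --- and verify that the stabiliser of $\pi$ is trivial by hand.) In either case, once a $\pi$ fixed by no non-identity element of $\GL(V)$ is in hand, the associated $G$ satisfies $\Aut(G) = \Aut_{c}(G)$, and the proof is complete.
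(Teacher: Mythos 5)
A preliminary remark: the paper does not prove this theorem at all --- it is imported verbatim from \cite{simple-con}, which establishes it by writing down an explicit choice of the $a_{i,j,k}$ and checking directly that it works. So there is no internal proof to compare against, and I can only assess your reconstruction on its own terms.

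Your first two steps are correct and essentially complete. The verification that the first four bullets hold for \emph{every} choice of the $a_{i,j,k}$, via the relatively free group $H$ and the central subgroup $N$ with $N \cap H' = 1$, is sound; you should just record, when deducing $Z(G) = G'$, that for $h \in H$ the condition $[h, H] \le N$ forces $[h, H] \le N \cap H' = 1$, which is what prevents the centre from growing in the quotient. The reduction of the last bullet to producing a $\pi \in \Hom(V, \exteriorpower^{2} V)$ whose stabiliser in $\GL(V)$ (for the action $g \cdot \pi = (\exteriorpower^{2} g) \circ \pi \circ g^{-1}$) is trivial is also correct: since $Z(G) = \Frat(G)$ has exponent $p$ and $p$ is odd, both $p$-th powers and commutators depend only on cosets of $\Frat(G)$, so a set-theoretic lift of any $g$ in the stabiliser respects the relations of~\eqref{eq:presentation}, and is an automorphism because it is surjective modulo the Frattini subgroup.

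The gap is the final step, which you yourself flag: you never actually exhibit a $\pi$ with trivial stabiliser, and that single step carries the entire content of the theorem. The counting strategy is viable --- for instance, at $n = 4$ a transvection $t$ has $\dim (\exteriorpower^{2} V \otimes V^{*})^{t} = 14$ against the full dimension $24$, and there are only about $p^{6}$ transvections, so that stratum contributes roughly $p^{20} \ll p^{24}$; reassuringly, the same computation at $n = 3$ gives $p^{4} \cdot p^{5} = p^{9}$ against $p^{9}$, so the bound degenerates exactly where your claim predicts. But turning this into a proof requires corank estimates uniformly over all conjugacy classes, not merely over residual ranks: unipotent and semisimple elements of the same residual rank have fixed spaces of different dimensions, and eigenvalue coincidences (e.g.\ $\lambda^{3} = 1$ when $3 \mid p - 1$) enlarge certain fixed spaces and must be checked separately. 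None of this bookkeeping is done, and your parenthetical alternative (explicit $a_{i,j,k}$ forcing the stabiliser to collapse step by step) is likewise only a promise --- it is, in fact, the route the cited source takes, and is where the hypothesis $n \ge 4$ genuinely enters. As written, the proposal proves the routine bullets and the correct linear-algebraic reformulation, but not the existence assertion that the theorem is actually about.
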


We claim the following
\begin{theorem}\label{thm:big}
  Let $G$ be one of the groups of Theorem~\ref{thm:all-central}.

  Then $T(G)$ contains a non-abelian subgroup of order 
  \begin{equation*}
    (p-1) \cdot p^{\binom{n}{2} \binom{n+1}{2}},
  \end{equation*}
  which  is the extension
  of an $\F_{p}$-vector space of dimension $\binom{n}{2} \binom{n+1}{2}$ by the
  multiplicative group $\F_{p}^{*}$ acting naturally.
\end{theorem}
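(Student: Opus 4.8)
The plan is to use the fact that, for the groups $G$ of Theorem~\ref{thm:all-central}, every automorphism is central and $G' = Z(G) = \Frat(G)$ has exponent $p$. Consequently the hypotheses of Proposition~\ref{prop:Delta} are automatically satisfied for \emph{every} regular subgroup $N \norm \Hol(G)$: condition~\eqref{item:a-assumption} holds because $\gamma(G)$ is a normal $p$-subgroup of $\Aut(G) = \Aut_{c}(G)$, and condition~\eqref{item:b-assumption} holds because $[Z(G), \gamma(G)] = [G, \gamma(G)]^{\,\cdots} = 1$ as $[G,\gamma(G)] \le Z(G)$ has exponent $p$ and $Z(G) \le \ker(\gamma)$. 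So $\Jc(G)$ is parametrized by the bilinear maps $\Delta : V \times V \to W$ with $V = G/Z(G) \cong \F_p^n$ and $W = G' \cong \F_p^{\binom n2}$; but since $\Aut(G)$ acts trivially on $V$ and on $W$ (it is central!), condition~\eqref{eq:Delta-beta} is vacuous, and $\Jc(G)$ is \emph{all} bilinear maps $V \times V \to W$, an $\F_p$-space of dimension $\binom n2 \cdot n^2$.

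Next I would isolate which $N \in \Jc(G)$ lie in $\Hc(G)$, i.e.\ give $(G,\circ) \cong G$. By Lemma~\ref{lemma:commutator-of-nu}, $[\nu(g),\nu(h)] = \nu([g,h] + \Delta(g,h) - \Delta(h,g))$, so the commutator form on $(G,\circ)$ is the commutator form of $G$ plus the antisymmetrization $\Delta - \Delta^{\mathrm{op}}$ of $\Delta$; and by Lemma~\ref{lemma:powers} the $p$-power map on $(G,\circ)$ sends $g$ to $g^p \cdot \Delta(g,g)^{\binom p2} = g^p$ (since $p$ is odd, $\binom p2 \equiv 0 \bmod p$ and $\Delta(g,g) \in W$ of exponent $p$). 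Hence $(G,\circ)$ has the \emph{same} power map as $G$, and it is isomorphic to $G$ exactly when the altered commutator form is equivalent, under $\GL(V)$ together with an induced action on $W$, to the original one. The key observation is that adding a \emph{symmetric} bilinear map to $\Delta$ does not change $(G,\circ)$ at all (the antisymmetrization is unchanged, and the power map was already unaffected), so the relevant parameter is really the class of $\Delta$ modulo symmetric forms, i.e.\ an element of the space of alternating bilinear maps $V \times V \to W$ together with the diagonal data — but the diagonal part $\Delta(g,g)$ is forced to contribute nothing. Counting: symmetric bilinear maps $V \times V \to W$ form a space of dimension $\binom{n+1}{2}\binom n2$, and these give pairwise $\circ$-isomorphic (indeed equal-up-to-relabeling) structures all lying in $\Hc(G)$; combined with the cyclic group of order $p-1$ from Proposition~\ref{prop:powers} (the power automorphisms $\theta_d$, which act on $W = \exteriorpower$-type data by scaling), one obtains a subgroup of $T(G)$ of order $(p-1)\cdot p^{\binom n2 \binom{n+1}2}$.

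To see this subgroup is non-abelian and has the stated extension structure, I would identify the $p^{\binom n2\binom{n+1}2}$ symmetric-perturbation elements as forming an elementary abelian normal $p$-subgroup $E$ of $T(G)$ — abelian because composing two perturbations by symmetric forms adds the forms, normal and elementary abelian by the exponent-$p$ structure of $W$ — and then check that the order-$(p-1)$ element $\theta_d$ conjugates a perturbation $\Delta_{\mathrm{sym}}$ to $d^{?}\cdot\Delta_{\mathrm{sym}}$ (the exact power of $d$ coming from how $\theta_d : x \mapsto x^d$ transforms a symmetric form valued in the exponent-$p$ group $G'$, where $G'$-elements are products of commutators and thus scale by $d^2$, while the $V$-arguments scale by $d^{-1}$ each, giving a net scalar that is a nontrivial character of $\F_p^*$). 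This nontrivial action makes the extension $E \rtimes \F_p^*$ non-abelian and gives exactly the claimed description. \textbf{The main obstacle} I anticipate is the bookkeeping in the second paragraph: making precise the claim that, modulo symmetric forms and modulo the (trivial) diagonal contribution, distinct $\Delta$'s give distinct elements of $T(G)$ — i.e.\ that no two of these $p^{\binom n2\binom{n+1}2}$ structures are identified by a further isomorphism — which requires showing that the only isomorphisms $(G,\circ) \to (G,\circ')$ available are the central ones already accounted for, using $\Aut(G) = \Aut_c(G)$ and Lemma~\ref{lemma:theta-by-alpha} to reduce to a rigidity statement about the commutator form on $V$.
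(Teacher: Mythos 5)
Your overall route is the paper's: restrict to the symmetric bilinear maps $\Delta : V \times V \to W$, note that for these the commutator and $p$-th power structure constants of $(G,\circ)$ coincide with those of $G$, so that by the presentation $x_{i} \mapsto x_{i}$ extends to an isomorphism $\theta_{\Delta} : G \to (G, \circ)$, check that the $\theta_{\Delta}$ compose by adding the forms, and adjoin the power maps $\theta_{d}$. However, the "main obstacle" you anticipate is not an obstacle, and worrying about it signals a conflation that also mars your second paragraph. Distinct symmetric $\Delta$'s give distinct $\gamma$'s (Proposition~\ref{prop:Delta} is a bijection), hence distinct regular subgroups $N_{\Delta} \in \Hc(G)$; since $T(G)$ acts \emph{regularly} on $\Hc(G)$, distinct elements of $\Hc(G)$ automatically correspond to distinct elements of $T(G)$. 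No rigidity statement about isomorphisms $(G,\circ) \to (G,\circ')$ is needed: elements of $\Hc(G)$ are subgroups of $S(G)$, not isomorphism classes. The same conflation is behind your claim that adding a symmetric form "does not change $(G,\circ)$ at all" --- it changes $\circ$ and $N$, only not the isomorphism type.

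The step that would actually fail as written is the non-abelianness. Your heuristic for the conjugation scalar gives $d^{2} \cdot d^{-1} \cdot d^{-1} = 1$, i.e.\ a \emph{trivial} action of $\F_{p}^{*}$ on $E$, which would make your subgroup abelian and contradict the theorem. The error is the assertion that $\theta_{d}$ scales $G'$ by $d^{2}$: $\theta_{d}$ is the $d$-th power map on all of $G$, so on the exponent-$p$ abelian group $W = G'$ it is multiplication by $d$, not the map induced functorially on $\exteriorpower^{2} V$ by $v \mapsto d v$. With this correction the net scalar is $d \cdot d^{-1} \cdot d^{-1} = d^{-1}$, which is a nontrivial character; one can confirm this by comparing regular subgroups, since $\rho(G)^{\theta_{\Delta} \theta_{d}} = N_{\Delta'}$ with $\Delta' = d'\Delta + \tfrac{d'-1}{2}[\cdot,\cdot]$ while $\rho(G)^{\theta_{d} \theta_{\Delta}} = N_{\Delta''}$ with $\Delta'' = \Delta + \tfrac{d'-1}{2}[\cdot,\cdot]$ (here $d'$ is the inverse of $d$), so that $\theta_{d}^{-1} \theta_{\Delta} \theta_{d} \equiv \theta_{d'\Delta}$ in $T(G)$. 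This yields exactly the stated extension of $\mathfrak{D}$ by $\F_{p}^{*}$ acting naturally. The paper itself leaves this last verification to the reader ("we readily obtain"), but your version of it must be repaired before it can be carried out.
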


Here again $V = G/G' = G/Z(G)$ and $W  = G'$ can be regarded as $\F_{p}$-vector
spaces,  and  $W$ is  naturally  isomorphic  to $\exteriorpower^{2}  V$.  The
conditions of Proposition~\ref{prop:Delta} apply, so we can use the $\Delta$
setting. Because of Theorem~\ref{thm:all-central},
condition~\eqref{eq:Delta-beta} of Proposition~\ref{prop:Delta} holds
trivially, so that we are simply looking  at all bilinear maps $\Delta : V
\times V \to W$ here.

Consider all such $\Delta$ which are symmetric with respect to the basis of
$V$ induced by the $x_{i}$. Under the pointwise operation of $W$, these
$\Delta$ form  a 
vector space 
$\mathfrak{D}$ of dimension 
$\binom{n}{2} \binom{n+1}{2}$ over $\F_{p}$. 
For the regular subgroup $N \norm \Hol(G)$ of $S(G)$ corresponding to
such a $\Delta$,
Lemma~\ref{lemma:commutator-of-nu} yields, for $1 \le i < j \le n$,
\begin{equation}\label{eq:comm-of-nu}
  [\nu(x_{i}), \nu(x_{j})] 
  =
  \nu([x_{i}, x_{j}] + \Delta(x_{i}, x_{j}) - \Delta(x_{j}, x_{i}))
  =
  \nu([x_{i}, x_{j}])
\end{equation}
and Lemma~\ref{lemma:powers} yields, for $1 \le i \le n$,
\begin{align*}
  \nu(x_{i})^{p} 
  = 
  \nu(x_{i}^{p} \cdot \Delta(x_{i}, x_{i})^{\binom{p}{2}})
  =
  \nu(x_{i}^{p}),
\end{align*}
since $\Delta(x_{i}, x_{i}) \in Z(G)$, and the latter group has
exponent $p$.
Thus in the corresponding group $(G, \circ) \cong N$ commutators and
$p$-th powers of generators are 
preserved, so that in view of~\eqref{eq:presentation}
for each $\Delta \in \mathfrak{D}$ there is an isomorphism $\theta_{\Delta} : G
\to (G, \circ)$ such that $x_{i} \mapsto x_{i}$ for each $i$. Since
the composition of $\theta_{\Delta}$ followed by $\nu$ is an
isomorphism $G \to N$, we obtain from~\eqref{eq:comm-of-nu} 
\begin{equation*}
  \nu([x_{i}, x_{j}]^{\theta_{\Delta}})
  =
  [\nu(x_{i}^{\theta_{\Delta}}), \nu(x_{j}^{\theta_{\Delta}})]
  =
  [\nu(x_{i}), \nu(x_{j})]
  =
  \nu([x_{i}, x_{j}]),
\end{equation*}
that is,
$\theta_{\Delta}$ fixes the elements of $G'$.

If $\Delta_{1}, \Delta_{2} \in \mathfrak{D}$, then for $1 \le i, j \le
n$ we have, since the $\Delta$ take values in $G'$,
\begin{align*}
  (x_{i} x_{j})^{\theta_{\Delta_{1}} \theta_{\Delta_{2}}} 
  &=
  (x_{i} x_{j} \Delta_{1}(x_{i}, x_{j}))^{\theta_{\Delta_{2}}}
  \\&=
  x_{i} x_{j} \Delta_{2}(x_{i}, x_{j})  \Delta_{1}(x_{i}, x_{j}),
\end{align*}
where we have applied
Lemma~\ref{lemma:from-theta-to-gamma}\eqref{eq:theta-gamma}, using the
facts that $\theta_{\Delta}$ fixes the elements of $G' = Z(G)$, and
$\gamma(G') = 1$. 

Therefore $\Set{ \theta_{\Delta} : \Delta \in \mathfrak{D} } \cong
\mathfrak{D}$ is an 
elementary abelian group of order $p^{\binom{n}{2}
  \binom{n+1}{2}}$. Considering also 
the $\theta_{d}$ of Section~\ref{sec:powers}, we readily obtain
Theorem~\ref{thm:big}. 

\providecommand{\bysame}{\leavevmode\hbox to3em{\hrulefill}\thinspace}
\providecommand{\MR}{\relax\ifhmode\unskip\space\fi MR }
\providecommand{\MRhref}[2]{%
  \href{http://www.ams.org/mathscinet-getitem?mr=#1}{#2}
}
\providecommand{\href}[2]{#2}

\end{document}